\def\rojo{}%\textcolor{red}}
\def\colorado{}%\textcolor{red}}
\def\colorados{}%\textcolor{red}}
\def\rouge{}%\textcolor{red}}
\def\red{}%\textcolor{red}}
\def\Rojo{}%\textcolor{red}}
\def\magenta{}%\textcolor{magenta}}
\newtheorem{theorem}{Theorem}[section]
\newtheorem{proposition}[theorem]{Proposition}
\newtheorem{lemma}[theorem]{Lemma}
\newtheorem{corollary}[theorem]{Corollary}
\newtheorem{definition}[theorem]{Definition}
\theoremstyle{remark}
\newtheorem{example}[theorem]{Example}
\newtheorem{remark}[theorem]{Remark}
\newcommand{\thmref}[1]{Theorem~\ref{#1}}
\newcommand{\propref}[1]{Proposition~\ref{#1}}
\newcommand{\defref}[1]{Definition~\ref{#1}}
\def\cat{{\sf{cat}}}
\def\TC{{\mathsf{TC}}}
\def\LS{\protect\operatorname{LS}}
\def\CW{\protect\operatorname{CW}}
\def\relcat{\protect\operatorname{relcat}}
\def\ANR{\protect\operatorname{ANR}}
\def\ENR{\protect\operatorname{ENR}}
\def\toi{\hookrightarrow}
\newcommand{\und}{\underline}
\newcommand{\normados}{\mathcal{C}(G_1,\ldots,G_m; K)}
\newcommand{\polyg}{\underline{G}^K}
\begin{document}

\title[\colorado{Fadell-Husseini motion planning}]{\colorado{Motion planning in} polyhedral products of groups \colorado{and a Fadell-Husseini approach to topological complexity}}

%%%%%%%%%%%%%%%%%%%%%%%%%%%%
\author{Jorge Aguilar-Guzm\'an}
\address{Departamento de Matem\'aticas\\
Centro de Investigaci\'on y de Estudios Avanzados del I.P.N.\\
M\'exico City 07000\\M\'exico}
\email{jaguzman@math.cinvestav.mx}

\author{Jes\'us Gonz\'alez}
\address{Departamento de Matem\'aticas\\
Centro de Investigaci\'on y de Estudios Avanzados del I.P.N.\\
M\'exico City 07000\\M\'exico}
\email{jesus@math.cinvestav.mx}

%%%%%%%%%%%%%%%%%%%%%%%%%%%%%%%%%%%
\begin{abstract}
We compute the topological complexity of a polyhedral product \colorado{$\mathcal{Z}$} defined \rouge{in terms of} an $\LS$-logarithmic family of \colorados{locally compact} connected $\CW$ topological groups. The answer is given \red{by a combinatorial formula that involves} the $\LS$ category of the polyhedral-product factors. As a by-product, we show that the Iwase-Sakai conjecture holds true for \colorado{$\mathcal{Z}$.} The proof methodology \rouge{uses} a Fadell-Husseini viewpoint for the monoidal topological complexity \colorado{(MTC)} of a space, which, under mild conditions, recovers Iwase-Sakai's original definition. \colorado{In the Fadell-Husseini context, the stasis condition \colorado{---MTC's \emph{raison d'\^etre}---} can be encoded at the covering level.} Our Fadell-Husseini-inspired definition provides an alternative to the \colorado{MTC} variant given by Dranishnikov, as well as to the ones provided by Garc\'ia-Calcines, Carrasquel-Vera and Vandembroucq in terms of relative category.
\end{abstract}

\keywords{Monoidal topological complexity, Iwase-Sakai conjecture, \colorado{Fadell-Husseini topological complexity,} polyhedral product, relative category.}

\subjclass[2020]{55M30, 55M15, 68T40}

\maketitle

%%%%%%%%%%%%%%%%%%%%%%%%%%%%%%%%%%%%%%
\section{Introduction}

As originally introduced in~\cite{Iwase}, the monoidal topological complexity of a path-connected space~$X$, denoted \rouge{by} $\TC^M(X)$, is \colorado{apparently} more restrictive \colorado{than Farber's} topological complexity of $X$, $\TC(X)$. \rouge{For} a locally finite simplicial complex \rouge{$X$,} Iwase and Sakai showed in~\cite[Theorem~1]{Iwase2} that $\TC^M(X)$ differs from $\TC(X)$ by at most one unit, \colorado{and conjectured that the equality} $\TC^M(X)=\TC(X)$ \colorado{holds true.}

In~\cite[Lemma 2.7]{Dran} Dranishnikov proved Iwase-Sakai's conjecture \colorado{for} a connected Lie group \rouge{$G$,}
\begin{equation}\label{tctcmcat}
\TC(G)=\cat(G)=\TC^M(G),
\end{equation}
\colorado{where the equality between $\TC(G)$ and $\cat(G)$, the $\LS$ category of $G$, is well known.} The first main result in this paper generalizes~(\ref{tctcmcat}) to the realm of polyhedral products defined by $\LS$-logarithmic families of \colorados{locally compact} connected CW topological groups. In particular, we show that Iwase-Sakai's conjecture remains valid in such a context. Explicitly:

\begin{theorem}\label{TC(G^K)=TCM(G^K)}
Let $\polyg$ denote the polyhedral product associated to an abstract simplicial complex $K$ \magenta{with vertex set $\{1,2,\ldots, m\}$} and a based family $\underline{G}=\{(G_i,e_i)\}_{i=1}^m$ of \colorados{locally compact} connected $\CW$ topological groups, where $e_i$ stands for the neutral element of~$G_i$. If \rouge{$\underline{G}$ is an LS-logarithmic family, i.e., if the equality}
$$
\cat(G_{i_1}\times\cdots\times G_{i_k})=\cat(G_{i_1})+\cdots+\cat(G_{i_k})
$$
holds \rouge{true} for any strictly increasing sequence \red{$1\leq i_1<\cdots<i_k\leq m$,} then
\begin{equation}\label{tctcmasasum}
\TC(\polyg)=\TC^M(\polyg)=\max\bigg\{\sum_{i\in\sigma_{1}\cup\sigma_{2}}\cat(G_{i})\colon \sigma_1,\sigma_2\in K\bigg\}.
\end{equation}
\end{theorem}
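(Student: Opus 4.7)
The strategy is a squeeze argument. Writing $N$ for the combinatorial expression on the right of~\eqref{tctcmasasum}, I would aim for the chain
$$N \;\le\; \TC(\polyg) \;\le\; \TC^M(\polyg) \;\le\; N,$$
the middle inequality being universal. The lower bound $N \le \TC(\polyg)$ comes from cohomology (zero-divisor cup-length), and the upper bound $\TC^M(\polyg)\le N$ from an explicit Fadell-Husseini-style motion-planning cover that uses the group structure of each factor.

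For the lower bound I would use the well-known description of the cohomology of a polyhedral product as a colimit, over the face poset of $K$, of the tensor products $\bigotimes_{i\in\sigma}H^*(G_i)$. Because each $G_i$ is a topological group, $\cat(G_i)=\cupl(G_i)$ (with suitable field coefficients), so I may pick classes $\omega_i^{(1)},\ldots,\omega_i^{(\cat(G_i))}\in\tilde H^*(G_i)$ whose cup product is nonzero. The zero-divisors $\bar\omega_i^{(j)}=1\otimes\omega_i^{(j)}-\omega_i^{(j)}\otimes 1\in H^*(G_i\times G_i)$ pull back nontrivially to $H^*(\polyg\times\polyg)$ along the diagonal-complementary face. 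Choosing a pair $(\sigma_1,\sigma_2)\in K\times K$ that attains the maximum in~\eqref{tctcmasasum} and multiplying the $\bar\omega_i^{(j)}$ over $i\in\sigma_1\cup\sigma_2$ produces a nonzero class of length $N$, where the LS-logarithmic hypothesis is exactly what keeps the relevant product nonzero. By the standard inequality $\zcl(\polyg)\le\TC(\polyg)$ this yields $N\le\TC(\polyg)$.

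For the upper bound I would cover $\polyg\times\polyg$ by pieces indexed by pairs $(\sigma_1,\sigma_2)\in K\times K$, refined by categorical open covers of size $\cat(G_i)+1$ on each factor. On each refined piece, Farber's translation trick for topological groups, applied coordinate-wise, supplies a local section of the free-path fibration; the number of rules needed is $\sum_{i\in\sigma_1\cup\sigma_2}\cat(G_i)$, bounded by $N$. The Fadell-Husseini viewpoint developed earlier in the paper lets me impose the stasis condition (constant paths on the diagonal) at the level of the covering rather than on each section individually, so the cover is automatically monoidal and produces a bound on $\TC^M$ rather than merely on $\TC$.

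The main obstacle is to arrange that the locally defined paths actually stay inside $\polyg$. Coordinate-wise translation fails outright when $\sigma_1\cup\sigma_2\notin K$, because an intermediate configuration may have support larger than any simplex of $K$. I would fix this by staging each motion in three phases: first retract the coordinates of $\sigma_1\setminus\sigma_2$ to the base point while support remains inside $\sigma_1$; then translate coordinates of $\sigma_1\cap\sigma_2$; and finally grow coordinates of $\sigma_2\setminus\sigma_1$ while support remains inside $\sigma_2$. Verifying that these staged plans patch continuously across the overlaps of the cover, and that the total count is exactly $N$ rather than a looser bound, is the delicate bookkeeping that the LS-logarithmic assumption is designed to make possible, and it is where I expect the bulk of the technical work to lie.
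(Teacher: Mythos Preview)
Your squeeze $N\le\TC(\polyg)\le\TC^M(\polyg)\le N$ is exactly the paper's strategy, but both endpoints are argued differently from the paper, and your lower bound contains a genuine gap.

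For the lower bound you assert that $\cat(G_i)=\cupl(G_i)$ because $G_i$ is a topological group, and then run a zero-divisor cup-length argument. That equality is not among the hypotheses and is not a general theorem for locally compact connected CW topological groups; the LS-logarithmic condition constrains $\cat$ of products, not cup-length, and does not force $\cupl=\cat$ on the factors or their products. The paper avoids cohomology entirely at this step: it quotes a structural fact from~\cite{AGO} to the effect that $\TC(\polyg)\ge\cat(\und{G}^{\sigma_1}\times\und{G}^{\sigma_2})$ whenever $\sigma_1,\sigma_2\in K$ are \emph{disjoint} (and since any subset of a simplex is a simplex, one may replace $\sigma_2$ by $\sigma_2\setminus\sigma_1$ to reduce to the disjoint case). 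LS-logarithmicity then converts the right side directly into $\sum_{i\in\sigma_1\cup\sigma_2}\cat(G_i)$.

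For the upper bound the paper does \emph{not} index the cover by simplex pairs. Your three-phase rule depends on a chosen $(\sigma_1,\sigma_2)$, but a point $(a_1,a_2)\in\polyg\times\polyg$ lies in many such pairs (any coordinate equal to $e_i$ may be dropped from or added to either simplex), so the rule is not well-defined as stated, and the continuity problem you flag is precisely that ambiguity. The paper's device is to make the motion independent of any simplex choice: the $i$-th coordinate moves only during the time window $\bigl[\tfrac12-\tfrac{d_i(a_{i1},e_i)}{2\delta(G_i)},\,\tfrac12+\tfrac{d_i(a_{i2},e_i)}{2\delta(G_i)}\bigr]$, which collapses to $\{1/2\}$ when both entries equal $e_i$. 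This automatically confines the path to $\und{G}^{\sigma_1}$ for $t\le 1/2$ and to $\und{G}^{\sigma_2}$ for $t\ge 1/2$, for \emph{every} pair of simplices containing the respective supports. The $N+1$ domains $W_j$ are then cut out by summing over all $m$ rows the index $k$ of the disjointified piece $U_{ik}\subset G_i\times G_i$ containing $(a_{i1},a_{i2})$; rows with both entries $e_i$ contribute~$0$, so the total never exceeds $N$. These $W_j$ are not open, which is why the generalized Fadell--Husseini variant $\TC^{FH}_g$ and Theorem~\ref{Iwase=Dranish=GC} are needed to convert the construction into a bound on $\TC^M$.
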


\colorado{The expression in (\ref{tctcmasasum}) should be compared to the fact that the} $\LS$ category of $\polyg$ is \colorado{also determined by} the $\LS$ category of the polyhedral-product factors. Namely, \red{under} the hypotheses of \thmref{TC(G^K)=TCM(G^K)}, \red{\cite[Theorem~1.4]{AGO} gives} 
\begin{equation}\label{cat of G^K}
\cat(\polyg)=\max\bigg\{\sum_{i\in\sigma}\cat(G_{i})\colon \sigma\in K\bigg\}.
\end{equation}

The proof of \thmref{TC(G^K)=TCM(G^K)} involves a Fadell-Husseini \colorado{flavored} definition of monoidal topological complexity of a space. \colorado{Such a concept} arises naturally \red{by} inputting Dranishnikov's and Garc\'ia-Calcines' views \colorado{---reviewed below---} into Iwase-Sakai's original definition.

Dranishnikov noticed in~\cite{Dran} that, if $X$ is an $\ENR$ space (or more generally an $\ANR$ space as we will see in \propref{equivalence of definitions} below), \red{Iwase-Sakai's} definition of $\TC^M(X)$ can be relaxed in the sense that the diagonal of $X$ does not have to be contained in each open domain covering $X\times X$ \rouge{(yet,} the continuous local sections of the end-points evaluation map $\colorado{e_{01}}\colon X^{[0,1]}\to X\times X$ are still required to yield constant paths on points of the diagonal). Furthermore, in~\cite[Remark~2.19]{JoseManuel}, Garc\'ia-Calcines \colorado{proved} that, when $X$ is an $\ANR$ space, $\TC^M(X)$ can be defined in terms of general \red{(not necessarily open) covers} of $X\times X$ \red{by} following the \red{lines} in Iwase-Sakai's work, that is, \red{by} requiring that the diagonal lies in each subset covering $X\times X$, and that the corresponding local sections yield constant paths when restricted to the diagonal of $X$. On the other hand, Fadell and Husseini defined in~\cite{Fadell} the relative category of a cofibered pair $(X,A)$, denoted \rouge{by} $\cat^{FH}(X,A)$, in terms of open sets covering $X$. In contrast \colorado{to} the definition of $\cat(X)$, \rouge{in the relative-category  context} \colorado{only} one of \colorado{the covering} open subsets \colorado{is required} to contain $A$ and deform to $A$ (rel $A$) within $X$, while the rest of the open sets are actually required to deform within $X$ to a point.

We introduce the Fadell-Husseini monoidal topological complexity of a space $X$ by \colorado{blending} the definition of $\cat^{FH}(X,A)$ \colorado{into} Dranishnikov's and Garc\'ia-Calcines' \colorado{viewpoints for} $\TC^M(X)$. Explicitly:

\begin{definition}\label{def:FHGMTC} \ 

\begin{itemize}
\item[(a)]The Fadell-Husseini monoidal topological complexity of a path connected space $X$, denoted \rouge{by} $\TC^{\colorado{FH}}(X)$, is the smallest \red{nonnegative integer} $n$ for which there is an open \red{cover} $\{U_0,\ldots,U_n\}$ of $X\times X$ by $n+1$ subsets, on each of which there exists a continuous section $s_i\colon U_i\rightarrow X^{[0,1]}$ of the end-points evaluation map \colorado{$e_{01}$ so that:}
\begin{itemize}
\item[(1)] \colorado{$U_{0}$ contains the diagonal $\Delta X=\{(x,x)\colon x\in X\};$}
\item[(2)] $s_{0}(x,x)=c_x$, the constant path at $x$, for all $x\in X;$
\item[(3)] $\Delta X\cap U_{i}=\varnothing$ for all $i\geq 1$.
\end{itemize}
\item[(b)]\colorado{The Fadell-Husseini \emph{generalized} monoidal topological complexity $\TC^{\colorado{FH}}_g(X)$ is defined as above, except that the elements of the covers $\{U_0,\ldots,U_n\}$ are not required to be open.}
\end{itemize}
\colorado{We agree to set \colorado{$\TC^{\colorado{FH}}(X)=\infty$ or $\TC^{\colorado{FH}}_g(X)=\infty$, if the required} coverings fail to exist.}
\end{definition}

The second main result in this paper asserts that \defref{def:FHGMTC} recovers the ones given by Iwase-Sakai ($\TC^M$), Dranishnikov ($\TC^{DM}$) and Garc\'ia-Calcines ($\TC^M_g$) when working with $\ANR$ spaces. 
\begin{theorem}\label{Iwase=Dranish=GC}
If $X$ is an $\ANR$ space, then 
$$
\TC^{\colorado{FH}}(X)=\TC^{\colorado{FH}}_{g}(X)=\TC^{DM}(X)=\TC^{M}(X)=\TC^M_g(X).
$$
\end{theorem}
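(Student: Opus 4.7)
The plan is to collapse all five quantities into a single value by combining direction-inclusions of cover-classes with two openness-dropping arguments and one main comparison that extends Dranishnikov's ENR result to the ANR setting. Several inequalities follow immediately from the definitions. Open covers form a special case of general covers, giving $\TC^{FH}_g(X)\le \TC^{FH}(X)$ and $\TC^M_g(X)\le \TC^M(X)$. Every Iwase-Sakai cover is a Dranishnikov cover, and every Fadell-Husseini cover is also a Dranishnikov cover because for $i\ge 1$ the stasis condition on $U_i\cap \Delta X$ holds vacuously (that intersection is empty), so $\TC^{DM}(X)\le \TC^M(X)$ and $\TC^{DM}(X)\le \TC^{FH}(X)$. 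Conversely, given an MTC cover $\{U_0,\ldots,U_n\}$ with stasis sections $s_i$, the family $\{U_0,\,U_1\setminus \Delta X,\ldots,U_n\setminus \Delta X\}$ is an FH cover of the same cardinality, the sets $U_i\setminus \Delta X$ remaining open since $\Delta X$ is closed in $X\times X$; this produces $\TC^{FH}(X)\le \TC^M(X)$, and the identical construction without insisting on openness gives $\TC^{FH}_g(X)\le \TC^M_g(X)$.

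Next, I would establish the two openness-dropping comparisons $\TC^M(X)\le \TC^M_g(X)$ and $\TC^{FH}(X)\le \TC^{FH}_g(X)$ in the ANR setting. The first is the content of García-Calcines' remark cited in the introduction, and the second follows by the structurally identical argument applied to FH covers. Both rely on the fact that in an ANR the Hurewicz fibration $e_{01}\colon X^{[0,1]}\to X\times X$ admits section extensions from closed subsets to open neighborhoods, so any local section of a non-open cover element can be thickened to an open neighborhood while preserving the relevant stasis/containment/disjointness conditions on the diagonal; the canonical constant-path section $c\colon \Delta X\to X^{[0,1]}$ extends in the same manner.

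The central technical step is the inequality $\TC^M(X)\le \TC^{DM}(X)$ for an ANR $X$. Given a DM cover $\{U_0,\ldots,U_n\}$ with sections $s_i$ stasis on $U_i\cap \Delta X$, I would first fix an open neighborhood $W$ of $\Delta X$ together with a stasis section $\sigma\colon W\to X^{[0,1]}$ extending $c$. Shrinking to $W'$ with $\overline{W'}\subset W$ and setting $V_i=U_i\cup W'$, every $V_i$ then contains $\Delta X$ and $\{V_i\}$ still covers $X\times X$ with the same cardinality. The delicate point is to build continuous stasis sections $\widetilde s_i\colon V_i\to X^{[0,1]}$ that agree with $s_i$ on $U_i\setminus W$ and with $\sigma$ near $\Delta X$, reconciling them on the overlap $U_i\cap W$. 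Since $s_i$ and $\sigma$ already match on the closed subset $U_i\cap \Delta X$, one constructs a fiberwise homotopy rel $U_i\cap \Delta X$ joining them, invoking the homotopy-lifting property of $e_{01}$ together with the cofibration $\Delta X\hookrightarrow X\times X$ that the ANR hypothesis supplies. The main obstacle resides here: the fibers of $e_{01}$ are free path spaces with prescribed endpoints, which are not generally contractible, so no naive pointwise interpolation of paths is available and the gluing genuinely requires fibered-homotopy techniques rather than a partition-of-unity argument. Once the $\widetilde s_i$ have been built, the cycle $\TC^{DM}\le \TC^{FH}\le \TC^M\le \TC^{DM}$ collapses into equalities, and combined with the openness-dropping comparisons this identifies all five quantities.
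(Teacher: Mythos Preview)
Your approach differs substantially from the paper's. You work directly with covers and sections, while the paper routes the hard inequality through relative category: it introduces Fadell--Husseini analogues $\relcat^{FH}_{op}(i_X)$ and $\relcat^{FH}_g(i_X)$ for the diagonal cofibration $i_X\colon\Delta X\hookrightarrow X\times X$, proves these equal $\relcat(i_X)$ via the Doeraene--El~Haouari fat-wedge characterization, and then closes the chain $\relcat^{FH}_g(i_X)\le\TC^{FH}_g(X)\le\TC^{FH}(X)=\TC^M(X)=\relcat(i_X)=\relcat^{FH}_g(i_X)$ using the known identity $\TC^M(X)=\relcat(i_X)$ from~\cite{CGV}.

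Your direct route has a genuine gap at the step you yourself flag as the main obstacle. For $\TC^M(X)\le\TC^{DM}(X)$ you need $s_i$ and $\sigma$ to be fiberwise homotopic rel $U_i\cap\Delta X$ over the overlap $U_i\cap W$. But the homotopy-lifting property of $e_{01}$ only \emph{lifts} homotopies from the base; it does not manufacture a vertical homotopy between two prescribed sections. Since the fibers of $e_{01}$ have the homotopy type of $\Omega X$, two sections agreeing on a closed subset need not be vertically homotopic at all, and the cofibration $\Delta X\hookrightarrow X\times X$ does not rescue this. The paper's Proposition~\ref{equivalence of definitions} sidesteps the problem entirely: shrink the open $U_i$ to closed $V_i$, define a \emph{single} section on the closed set $V_i\cup\Delta X$ by gluing $s_i|_{V_i}$ with constant paths (well-defined because $s_i$ is already stasis on $V_i\cap\Delta X$), then extend to an open neighborhood using the ANR property---no competing sections to reconcile.

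A related gap is your claim that $\TC^{FH}(X)\le\TC^{FH}_g(X)$ follows by a ``structurally identical'' thickening. Garc\'ia-Calcines' own argument for $\TC^M=\TC^M_g$ already goes through relative category, not a naive thickening. When one thickens a non-open $U_0$ (with stasis section $s_0$) to an open $V_0$, the extended section $\sigma_0$ satisfies only $\sigma_0|_{U_0}\simeq s_0$, so $\sigma_0|_{\Delta X}$ is merely a loop-valued section homotopic to $c$, not equal to it; stasis is lost. This is exactly why the paper develops $\relcat^{FH}_g$ and appeals to the fat-wedge lifting criterion rather than arguing on covers directly.
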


\colorado{As noted above, the last equality in Theorem~\ref{Iwase=Dranish=GC} is due to Garc\'ia-Calcines, whereas the next-to-last equality is due to Dranishnikov (see Proposition~\ref{equivalence of definitions} below).}

\colorado{In the generalized setting, condition~(3) in \defref{def:FHGMTC} can be omitted without altering the value of $\TC^{FH}_g(X)$, as the diagonal \red{$\Delta X$} can be removed, if needed, from the sets $U_1,\ldots,U_n$. A similar observation applies in the non-generalized setting as long as $X$ is a Hausdorff space. Far more striking is the role of the stasis condition (2) in Definition~\ref{def:FHGMTC}. The next result \rouge{was pointed out to the authors by} J.~M.~Garc\'ia-Calcines:}

\begin{theorem}\label{dejosemanuel}
\colorados{If $X$ is a locally equiconnected  Hausdorff space such that $X\times X$ is normal, \rouge{then} the stasis condition $(2)$ \rouge{in Definition~\ref{def:FHGMTC}} can be \rouge{ignored} without altering the \rouge{resulting} numerical value of $\TC^{FH}(X)$. The same conclusion holds in the generalized setting \rouge{if} $X$ is an $\ANR$ space.}
\end{theorem}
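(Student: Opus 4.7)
\emph{Plan.} One inequality is trivial: dropping the stasis condition~(2) from Definition~\ref{def:FHGMTC} can only lower or preserve the resulting invariant, so $\TC^{FH}(X)$ dominates its variant defined without~(2). For the reverse inequality, suppose we are given an open cover $\{U_0,\ldots,U_n\}$ of $X\times X$ and continuous sections $s_i\colon U_i\to X^{[0,1]}$ of $e_{01}$ satisfying only conditions~(1) and~(3); the goal is to modify $s_0$ into a section $\tilde s_0\colon U_0\to X^{[0,1]}$ obeying $\tilde s_0(x,x)=c_x$, without changing the remaining $U_i$'s and~$s_i$'s.

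By local equiconnectedness, there exist an open neighborhood $V\subseteq U_0$ of $\Delta X$ and a continuous section $\lambda\colon V\to X^{[0,1]}$ satisfying $\lambda(x,x)=c_x$. The Hausdorff hypothesis makes $\Delta X$ closed, and normality of $X\times X$ then allows, via iterated Urysohn shrinking, the construction of open sets $\Delta X\subseteq W_1\subseteq\overline{W_1}\subseteq W_2\subseteq\overline{W_2}\subseteq V$ together with a continuous $\phi\colon X\times X\to[0,1]$ with $\phi=1$ on $\overline{W_1}$ and $\phi=0$ on $X\times X\smallsetminus W_2$.

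I would then define $\tilde s_0$ on $U_0$ by a path-concatenation scheme governed by $\phi$, blending the stasis-satisfying section $\lambda$ (dominant near $\Delta X$) with $s_0$ (dominant away from it). The concatenation is arranged so that on $\overline{W_1}$ the new section is exactly $\lambda(a,b)$ (yielding $\tilde s_0(x,x)=\lambda(x,x)=c_x$) and on $X\times X\smallsetminus W_2$ it coincides with $s_0(a,b)$; in the transition annulus $W_2\smallsetminus\overline{W_1}$, one interpolates by traversing a $\lambda$-segment for a $\phi(a,b)$-fraction of the time parameter and completing the path with the reparametrized remainder of $s_0(a,b)$, so that the two pieces glue continuously at the joint.

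The principal technical obstacle is the well-definedness and continuity of the blending in the transition region: one must verify that all auxiliary pairs appearing as arguments of $\lambda$ remain in the open set~$V$. A tube-lemma type argument, leveraging the compactness of $[0,1]$, the openness of~$V$, and the uniform continuity of $\lambda$, $s_0$, and $\phi$ on compact subsets, allows the shrinking of $W_2$ inside~$V$ to force this inclusion; crucially, the base case at $\Delta X$ is immediate since there the formula only involves $\lambda$ at diagonal pairs, which lie in~$V$ by construction. For the generalized statement, the ANR hypothesis replaces normality: the homotopy extension property of the cofibration $\Delta X\hookrightarrow X\times X$ provides enough flexibility for the analogous construction to go through with non-open $W_i$'s, as in García-Calcines' treatment of $\TC^M_g$ recalled before Definition~\ref{def:FHGMTC}.
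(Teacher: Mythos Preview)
Your plan has a genuine gap at the heart of the blending step. You want to modify only the pair $(U_0,s_0)$ by interpolating between $s_0$ and the equiconnecting section $\lambda$ via a Urysohn function~$\phi$. But on the transition annulus this cannot be made to work. If the interpolation is a concatenation built solely from $s_0(a,b)$, $\lambda(a,b)$ and their reverses (with $\phi$-dependent weights), then one is effectively asking for a path in the fibre $e_{01}^{-1}(a,b)$ from $s_0(a,b)$ to $\lambda(a,b)$; since $\pi_1(X)$ may be nontrivial, no such path need exist, and a direct check shows that every such weighted concatenation fails to have the correct endpoints at one of $\phi=0$ or $\phi=1$. The alternative---applying $\lambda$ to ``auxiliary pairs'' such as $(a,\,s_0(a,b)(\phi))$---runs into the problem you flag but do not resolve: even on the diagonal, $s_0(x,x)$ is an \emph{arbitrary} loop at $x$, so $s_0(x,x)(t)$ can be far from~$x$ for intermediate $t$, and the pair $(x,s_0(x,x)(t))$ need not lie in $V$. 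Shrinking $W_2$ does not help, because the wildness of $s_0$ near the diagonal is not controlled by the size of~$W_2$; the tube lemma gives nothing here.

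The paper's argument avoids interpolation altogether. It \emph{replaces} $U_0$ by a disjoint union $V_0=(U_0\setminus\overline{M})\sqcup M$, using $s_0$ on the first piece and $\lambda$ on the second. This loses the boundary layer $\overline{M}\setminus M$, so to restore a cover the paper modifies \emph{all} remaining domains, setting $V_i=W_i\sqcup(N\setminus\Delta X)$ with section $s_i$ on $W_i$ and $\lambda$ on $N\setminus\Delta X$, where $N$ is an open neighbourhood of $\Delta X$ disjoint from each~$\overline{W_i}$. (This uses $n\geq1$; the case $n=0$ is handled separately since $X$ is then contractible.) For the generalized assertion, the paper does not invoke the homotopy extension property directly; instead it uses the ANR enlargement theorem of Garc\'ia-Calcines to fatten the non-open domains to open ones carrying homotopic sections, and then applies the open case just proved.
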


\colorado{Consequently, the equality $\TC(X)=\TC^M(X)$ \rouge{in the Iwase-Sakai conjecture holds true} for any ANR space $X$ for which there is a (not necessarily monoidal) motion planner with $\TC(X)+1$ (not necessarily open) local domains one of which contains the diagonal $\Delta X$ (cf.~Corollary~3 in \cite{Iwase2}).}

The proof of \thmref{Iwase=Dranish=GC} \red{follows} the guideline established by Garc\'ia-Calcines in~\cite{JoseManuel} to show that $\TC^M(X)=\TC_g^M(X)$ when dealing with an $\ANR$ space $X$. In fact, paralleling many of his techniques, we introduce in Definitions~\ref{def:relcatFH} and~\ref{def:relcatgFH} below the notions of $\relcat^{FH}_{op}(i_X)$ and $\relcat^{FH}_{g}(i_X)$, where $i_X:A\toi X$ stands for a cofibration. Both concepts represent a Fadell-Husseini version of the concepts $\relcat_{op}(i_X)$ and $\relcat_{g}(i_X)$, which were widely studied in~\cite{JoseManuel} in order to provide a characterization by covers of relative category in the sense of Doeraene-El Haouari. We show that if $X$ is an $\ANR$ space and $i_X:\Delta X\toi X\times X$ denotes the canonical cofibration, then
\begin{equation}\label{all relcat are equal}
\relcat^{FH}_g\!(i_X\hspace{-.3mm})=\relcat_{op}^{FH}\!(i_X\hspace{-.3mm})=\relcat(i_X\hspace{-.3mm})=\relcat_{op}(i_X\hspace{-.3mm})=\relcat_g(i_X\hspace{-.3mm}),
\end{equation}
where the last two equalities were \red{proved} in~\cite[Theorems~1.6 and 2.16]{JoseManuel}. Finally, the crucial ingredient to prove \thmref{Iwase=Dranish=GC} comes from~\cite[Theorem~12]{CGV}, where the central term in~(\ref{all relcat are equal}) is shown to agree with $\TC^M(X)$.

The authors gratefully acknowledge Jos\'e Manuel Garc\'ia-Calcines for a careful reading and valuable comments on a preliminary version of this work, and for his generous permission to include here his Theorem~\ref{dejosemanuel}.

\section{Preliminaries}
\subsection{Topological complexity and its monoidal version}

\begin{definition}[Farber]\label{definition of tc}
\colorado{For a path-connected space $X$, let $e_{01}\colon X^{[0,1]}\rightarrow X\times X$ be the end-points evaluation map that sends each path $\gamma$ in $X$ into the ordered pair $(\gamma(0),\gamma(1))$.} The topological complexity of $X$, denoted \rouge{by} $\TC(X)$, is defined as the smallest $n$ for which there is an open cover $\{U_0,\ldots,U_n\}$ of $X\times X$ by $n+1$ open sets, on each of which there is a continuous local section $s_i\colon U_i\rightarrow X^{[0,1]}$ of \colorado{$e_{01}$.} If the coverings fail to exist, we agree to set $\TC(X)=\infty$. The \rouge{open} sets $U_i$ covering $X\times X$ and the corresponding local sections $s_i$ of \colorado{$e_{01}$} are called local domains and local rules, respectively, whereas the family $\{(U_i,s_i)\}$ is known as a motion planner.
\end{definition}

As we pointed out at the beginning of the introduction, monoidal topological complexity is \colorado{an apparently} stronger version of $\TC$ satisfying an additional condition: if $X$ is thought of as the configuration space of a mechanical system and $\red{A,B}\in X$ \red{is a pair of initial-final configurations of the system with $A=B$, then} the continuous motion of the system from $A$ to $B$ is required to be the constant path at $A$. Such a requirement appears to be quite natural in actual applications. Explicitly:
\begin{definition}[Iwase-Sakai]\label{MTC}
The monoidal topological complexity of a path-connected space $X$, denoted \rouge{by} $\TC^{M}(X)$, is the smallest $n$ for which there is an open cover $\{U_0,\ldots,U_n\}$ of $X\times X$ by $n+1$ open sets, each one containing the diagonal $\Delta X=\{(x,x)\colon x\in X\}$, and on each of which there is a continuous local section $s_i\colon U_i\rightarrow X^{[0,1]}$ of the end-points evaluation map $\colorado{e_{01}}\colon X^{[0,1]}\rightarrow X\times X$ such that, for each $x\in X$, $s_i(x,x)=c_{x}$, the constant path at $x$. Such a section is called reserved. If the coverings fail to exist, we agree to set $\TC^{M}(X)=\infty$.
\end{definition}

The example exhibited in~\cite[p.13]{JoseManuel} shows that, \rouge{unlike the usual topological complexity,}  the monoidal topological complexity \rouge{fails to be} a homotopy invariant. \red{It is known that} $\TC^{M}(X)$ is a homotopy invariant if $X$ is locally equiconnected, i.e., provided the canonical \rouge{diagonal} embedding $\Delta X\hookrightarrow X\times X$ is a cofibration (see~\cite[Theorem 12]{CGV} and~\cite[Proposition 2.17]{JoseManuel}). An important instance of \colorado{locally} equiconnected spaces is given by an absolute neighborhood retract ($\ANR$) which, throughout this paper, means a metrizable space $X$ satisfying the following property: every \rouge{continuous} map $f\colon A\rightarrow X$ \rouge{defined on a closed subset $A$ of a metrizable space~$Y$} can be \rouge{continuously} extended over an open neighborhood $U$ of $A$ in $Y$.

\colorado{The} next proposition claims that the condition $\Delta X\subseteq U_{i}$, imposed \rouge{on} each set of the open cover $\{U_{i}\}_{i=0}^n$ of $X\times X$ in Definition~\ref{MTC}, can be omitted in the case when $X$ is an $\ANR$ space.

\begin{proposition}\label{equivalence of definitions}
If $X$ is an $\ANR$ space, then $\TC^M(X)=\TC^{DM}(X)$, where the latter expression is defined to be the smallest nonnegative integer $n$ for which there is an open cover $\{U_0,\ldots,U_n\}$ of $X\times X$, on each of which there is a continuous local section $s_i\colon U_i\rightarrow X^{[0,1]}$ of the fibration $\colorado{e_{01}}\colon X^{[0,1]}\rightarrow X\times X$ such that $s_i(x,x)=c_{x}$ for all $x\in X$ with $(x,x)\in U_i$.
\end{proposition}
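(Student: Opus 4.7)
The easy direction $\TC^{DM}(X)\leq\TC^M(X)$ is immediate: every Iwase-Sakai monoidal motion planner satisfies the weaker Dranishnikov conditions. For the reverse, I would start from a $\TC^{DM}$-planner $\{(U_i,s_i)\}_{i=0}^n$ and enlarge each $U_i$ to an open set $U_i'\supseteq U_i\cup\Delta X$ equipped with a reserved section $s_i':U_i'\to X^{[0,1]}$. The ANR hypothesis supplies local equiconnection: an open neighborhood $V$ of $\Delta X$ in $X\times X$ and a continuous map $\lambda:V\times[0,1]\to X$ satisfying $\lambda(x,y,0)=x$, $\lambda(x,y,1)=y$, and $\lambda(x,x,t)=x$. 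Setting $\sigma(x,y)(t):=\lambda(x,y,t)$ yields a reserved local section $\sigma:V\to X^{[0,1]}$ of $e_{01}$.

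Following Dranishnikov's original argument for ENR spaces (where linear interpolation in an ambient Euclidean space was available), the plan is to use $\lambda$ in place of linear interpolation. Choose a Urysohn cutoff $\phi:X\times X\to[0,1]$ equal to $1$ on a small open neighborhood $W_i$ of $\Delta X\cap U_i$ and vanishing outside a larger neighborhood $V_i^*\subseteq U_i\cap V$, and define
$$
\widetilde{s}_i(x,y)(t):=\lambda\bigl(s_i(x,y)(t),\,\sigma(x,y)(t),\,\phi(x,y)\bigr)
$$
for $(x,y)\in V_i^*\cap U_i$, extended by $\widetilde{s}_i:=s_i$ on $U_i\setminus V_i^*$. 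The reservation condition $s_i(x,x)=c_x$ on $U_i\cap\Delta X$, together with continuity of $s_i$ and $\sigma$, guarantees that for $(x,y)\in U_i$ sufficiently near $\Delta X\cap U_i$ the pair $(s_i(x,y)(t),\sigma(x,y)(t))$ belongs to $V$ at every $t\in[0,1]$, so the formula is well-defined; it yields a continuous reserved section of $e_{01}$ on $U_i$ coinciding with $\sigma$ on $W_i\cap U_i$. Pasting $\widetilde{s}_i$ on $U_i$ with $\sigma$ on a suitable open neighborhood $W_i^+$ of $\Delta X$ in $X\times X$ chosen so that $W_i^+\cap U_i\subseteq W_i$ produces the required reserved section $s_i'$ on $U_i':=U_i\cup W_i^+\supseteq\Delta X$. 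Since $\bigcup_{i=0}^n U_i=X\times X$, also $\bigcup_{i=0}^n U_i'=X\times X$, and $\{(U_i',s_i')\}_{i=0}^n$ is a $\TC^M$-planner witnessing $\TC^M(X)\leq n$.

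The main technical obstacle arises at boundary points $(x_0,x_0)\in\Delta X\cap(\overline{U_i}\setminus U_i)$: near such a point, $s_i$ need not stay close to the constant path $c_{x_0}$, so the equiconnection formula for $\widetilde{s}_i$ may fail arbitrarily close to $\Delta X$, obstructing the uniform choice of the open neighborhood $W_i^+$ whose intersection with $U_i$ must lie in the region where $\widetilde{s}_i=\sigma$. Overcoming this requires the metrizability and paracompactness of $X\times X$ coming from the ANR hypothesis, used to patch the local interpolations into a single open $V_i^*$ and to separate the $\Delta X$-portion not meeting $U_i$ from $U_i$; a preliminary shrinking of the original $\TC^{DM}$-cover, if needed, ensures that the diagonal does not meet the boundary of each $U_i$, completing the argument.
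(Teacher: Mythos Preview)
Your approach differs from the paper's, and the gap you yourself flag is not closed by the fix you propose. The claim that ``a preliminary shrinking of the cover ensures that the diagonal does not meet the boundary of each $U_i$'' is false in general. Take $X=[0,1]$ with $U_0=\{(x,y):x<2/3\}$ and $U_1=\{(x,y):x>1/3\}$ (with, say, linear sections). For any open shrinking $U'_i\subseteq U_i$ still covering $[0,1]^2$, the set $\Delta X\cap U'_i$ is open in the connected space $\Delta X\cong[0,1]$; if in addition $\Delta X\cap\partial U'_i=\varnothing$, then $\Delta X\cap U'_i$ is also closed in $\Delta X$, hence empty or all of $\Delta X$. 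Both alternatives are impossible, since $(1,1)\notin U_0\supseteq U'_0$ and $(0,0)\notin U_1\supseteq U'_1$. So the boundary obstacle necessarily persists, and with it the obstruction to finding the open neighborhood $W_i^+$ of $\Delta X$ with $W_i^+\cap U_i\subseteq W_i$ that your gluing step requires. The companion claim about ``separating the $\Delta X$-portion not meeting $U_i$ from $U_i$'' fails for the same reason: $\Delta X\setminus U_i$ and $U_i$ cannot be separated by open sets when $\Delta X$ meets $\overline{U_i}\setminus U_i$.

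The paper sidesteps interpolation entirely. It first shrinks to a \emph{closed} cover $\{V_i\}$ with $V_i\subseteq U_i$, extends $s_i|_{V_i}$ to $\overline{s}_i$ on $V_i\cup\Delta X$ by constant paths (well-defined and continuous because $V_i$ and $\Delta X$ are closed and $s_i$ is reserved on $U_i\cap\Delta X$), passes to the adjoint map
\[
u_i\colon\bigl((V_i\cup\Delta X)\times[0,1]\bigr)\cup\bigl(X\times X\times\{0,1\}\bigr)\longrightarrow X,
\]
and then invokes the ANR extension property directly to extend $u_i$ over an open neighborhood $W_i$; a tube-lemma argument yields an open $N_i\supseteq V_i\cup\Delta X$ with $N_i\times[0,1]\subseteq W_i$, and the resulting section on $N_i$ is reserved by construction. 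The point is that it is the full ANR extension property---not merely the local equiconnectedness you use---that defeats the boundary problem: one extends an already-glued section outward from the closed set $V_i\cup\Delta X$, instead of trying to match two sections across a region where $s_i$ is uncontrolled.
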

\begin{proof}
Clearly $\TC^{DM}(X)\leq \TC^M(X)$. \red{We} show the opposite inequality \colorado{by following the indications at the bottom of page 4365 of~\cite{Dran}.}

Let $\{U_0,\ldots,U_n\}$ be an open cover of $X\times X$ by sets that admit continuous local sections $s_i\colon U_i\rightarrow X^{[0,1]}$ of the fibration \colorado{$e_{01}$} such that $s_i(x,x)=c_{x}$ for all $x\in X$ with $(x,x)\in U_i$. Since $X\times X$ is a normal space, we can assure the existence of a closed cover $\{V_{0},\ldots,V_{n}\}$ of $X\times X$ with $V_{i}\subseteq U_{i}$ for all $i\in\{0,1,\ldots,n\}$. Then we have a continuous extension $$\overline{s}_{i}\colon V_{i}\cup\Delta X\rightarrow X^{[0,1]}$$ of $s_{i}|_{V_i}$ defined by
\[
\overline{s}_{i}(x,x')=\begin{cases} s_{i}(x,x'), & \text{ if } (x,x')\in V_{i}; \\
c_{x},&\text{ if } (x,x')\in \Delta X.
\end{cases}
\]
For $i\in\{0,1,\ldots,n\}$, let $\Gamma_{i}$ stand for the closed subset $((V_{i}\cup\Delta X)\times [0,1])\cup (X\times X\times\{0,1\})$ of $X\times X\times [0,1]$ and define \red{a} continuous function $u_{i}\colon\Gamma_{i}\rightarrow X$ \red{by}
\[
u_{i}(x,x',t)=\begin{cases}
\overline{s}_{i}(x,x')(t),&\text{ if } (x,x',t)\in (V_{i}\cup\Delta X)\times [0,1];\\
x,&\text{ if } (x,x',t)\in X\times X\times \{0\};\\
x',&\text{ if } (x,x',t)\in X\times X\times \{1\}.
\end{cases}
\]
Note that $u_{i}$ is well-defined because $\overline{s}_{i}$ is a continuous local section of \colorado{$e_{01}$.} Since $X$ is an $\ANR$ space, there are open neighborhoods $W_{i}$ of $\Gamma_i$ in $X\times X\times [0,1]$ and continuous maps $\overline{u}_{i}\colon W_{i}\rightarrow X$ with $\overline{u}_{i}|_{\Gamma_i}=u_{i}$. By the compactness of $[0,1]$, \red{we} can take an open set $N_{i}$ in $X\times X$ containing $V_{i}\cup\Delta X$ such that $N_{i}\times [0,1]\subseteq W_{i}$. Finally, the required reserved section $s'_{i}\colon N_{i}\rightarrow X^{[0,1]}$ of the fibration \colorado{$e_{01}$} is defined by $s'_{i}(x,x')(t)=\overline{u}_{i}(x,x',t)$ for all $(x,x')\in N_{i}$ and $t\in[0,1]$. Indeed, by construction, $s'_{i}$ is a continuous extension of $\overline{s}_{i}$. Therefore, the new open cover $\{N_{0},\ldots,N_{n}\}$ of $X\times X$ fulfills the requirements of Definition~\ref{MTC}.
\end{proof}
\begin{remark}\label{TCM=TCDM=TCFHM}
For a Hausdorff space $X$ (so that $\Delta X$ is closed), the inequalities $\TC^{DM}(X)\leq\TC^{\colorado{FH}}(X)\leq\TC^M(X)$ follow directly from the definitions. Furthermore, these inequalities are in fact equalities if $X$ is an $\ANR$, in view of \propref{equivalence of definitions}.
\end{remark}

The proof of the equalities $\TC(G)=\cat(G)=\TC^M(G)$ given in~\cite[Lemma 2.7]{Dran}, where $G$ is a \Rojo{connected} Lie \red{group}, can be adapted to show that the Iwase-\red{Sakai} conjecture holds \magenta{true} for a \colorados{locally compact} connected $\CW$ topological group. We spell out the details (in Proposition~\ref{MTC of a CW-TG} below) as they will be useful in \colorado{Section}~\ref{applications} for constructing an explicit motion planner leading to the inequality 
$$
\TC^M(\polyg)\leq\max\bigg\{\sum_{i\in\sigma_{1}\cup\sigma_{2}}\cat(G_{i})\colon \sigma_1,\sigma_2\in K\bigg\}
$$
in \thmref{TC(G^K)=TCM(G^K)}.

\begin{proposition}\label{MTC of a CW-TG}
If $G$ \rouge{is} a \colorados{locally compact} connected $\CW$ topological group, then $\TC(G)=\cat(G)=\TC^{M}(G)$. 
\end{proposition}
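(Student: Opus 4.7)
The plan is to establish the chain $\TC(G)=\cat(G)\le\TC^M(G)\le\cat(G)$. The first equality is classical for topological groups: the self-homeomorphism $\mu(x,y)=(x,x^{-1}y)$ of $G\times G$ transports a categorical cover of $G$ based at the identity $e$ into a motion-planning cover of $G\times G$, while the inequality $\cat(G)\le\TC(G)$ always holds. The inequality $\TC(G)\le\TC^M(G)$ is immediate from Definitions~\ref{definition of tc} and~\ref{MTC}, leaving only $\TC^M(G)\le\cat(G)$ to prove.

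Set $n=\cat(G)$. Since $G$ is a locally compact connected CW topological group---hence metrizable and thus an $\ANR$ in the paper's sense, with a nondegenerate basepoint $e$---I would first produce a categorical open cover $\{V_0,\ldots,V_n\}$ of $G$ with $e\in V_i$ for every $i$, together with null-homotopies $H_i\colon V_i\times[0,1]\to G$ satisfying $H_i(x,0)=x$ and $H_i(x,1)=e$. The pivotal trick is to exploit the group multiplication to normalize each $H_i$: define
\[
\widetilde{H}_i(x,t):=H_i(e,t)^{-1}\cdot H_i(x,t),
\]
which is continuous and satisfies $\widetilde{H}_i(x,0)=x$, $\widetilde{H}_i(x,1)=e$, and---crucially---the pointed condition $\widetilde{H}_i(e,t)=e$ for all $t$. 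Then set $U_i:=\{(x,y)\in G\times G:x^{-1}y\in V_i\}$; each $U_i$ is open by continuity of $(x,y)\mapsto x^{-1}y$, and contains $\Delta G$ since $e\in V_i$, so $\{U_0,\ldots,U_n\}$ is an open cover of $G\times G$ with every element containing the diagonal. Define $s_i\colon U_i\to G^{[0,1]}$ by
\[
s_i(x,y)(t):=x\cdot\widetilde{H}_i(x^{-1}y,1-t).
\]
A direct check gives $s_i(x,y)(0)=x$ and $s_i(x,y)(1)=y$, so $s_i$ is a continuous local section of $e_{01}$, while the normalization yields $s_i(x,x)(t)=x\cdot\widetilde{H}_i(e,1-t)=x=c_x(t)$, verifying the reserved condition. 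This gives $\TC^M(G)\le n$ and completes the argument.

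I expect the main obstacle to be the preliminary reduction to a categorical cover with $e$ in every $V_i$, which is not automatic from the definition of $\cat$. Starting from an arbitrary categorical cover $\{W_0,\ldots,W_n\}$ of $G$, the strategy is to enlarge each $W_i$ by a small open contractible neighborhood of $e$ and to modify the contracting homotopies accordingly, leveraging the $\ANR$ structure of $G$ and the cofibrancy of the inclusion $\{e\}\hookrightarrow G$ (automatic for pointed CW complexes) via a homotopy extension argument. Once that preparatory step is in place, the rest of the proof assembles cleanly via the group-theoretic construction above.
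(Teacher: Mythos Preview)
Your core argument is correct and takes a route that differs from the paper's. The paper does not try to place $e$ in every member of the categorical cover; instead it arranges $e\in N_0$ and $e\notin N_i$ for $i\ge1$, normalizes only $H_0$ (via the cofibration $\{e\}\hookrightarrow G$ and \cite[Lemma~1.25]{CLOT}), and thereby proves the Fadell--Husseini inequality $\TC^{FH}(G)\le\cat(G)$; the passage to $\TC^M$ is then handled by the general ANR identity of Remark~\ref{TCM=TCDM=TCFHM}. Your approach bypasses $\TC^{FH}$ entirely and aims directly at Iwase--Sakai's definition, and your group-theoretic normalization $\widetilde H_i(x,t)=H_i(e,t)^{-1}H_i(x,t)$ is a pleasant shortcut that replaces the cofibration-based normalization. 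What your route buys is a self-contained argument independent of the paper's $\TC^{FH}$ machinery; what the paper's route buys is a trivial preparatory step on the categorical cover (no need for $e$ in every set) together with a construction tailored to the later polyhedral-product motion planner.

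There is, however, a genuine gap in your preliminary reduction. Enlarging an arbitrary categorical $W_i$ by a small contractible open neighborhood $N$ of $e$ need not keep $W_i\cup N$ categorical: for $G=S^1$ and $W_0=S^1\setminus\{e\}$ one gets $W_0\cup N=S^1$ no matter how small $N$ is, so no modification of the contracting homotopy can rescue this. The standard repair is to first shrink to a closed cover $\{C_i\}$ by normality, adjoin the single point $e$ to each $C_i$ (when $e\notin C_i$ the set $C_i\cup\{e\}$ is a topological disjoint union, hence categorical via the obvious piecewise homotopy stationary at $e$), and then use the ANR extension maneuver---exactly as in the proof of Proposition~\ref{equivalence of definitions}---to fatten $C_i\cup\{e\}$ to an open categorical $V_i\ni e$ with the contraction still stationary at~$e$. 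With that correction in place, your construction of the reserved sections $s_i$ goes through as written.
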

\begin{proof}
Since $\TC(G)\leq\TC^M(G)$, \red{with} the former \red{agreeing} with $\cat(G)$ (see \cite[Lemma 8.2]{Farber2}, where the same proof works for topological groups), it suffices to show that $\TC^{M}(G)\leq \cat(G)$. Furthermore, since a \colorados{locally compact $\CW$ complex is an $\ANR$ space} (see Appendix II of \cite{Lundell}), we \colorado{only need to} show that $\TC^{\colorado{FH}}(G)\leq\cat(G)$, \colorado{in view of} Remark~\ref{TCM=TCDM=TCFHM}.

\rouge{Let $n:=\cat(G)$ and choose} an open cover $\{N_0,\ldots,N_n\}$ of $G$ together with homotopies $H_{i}\colon N_{i}\times[0,1]\rightarrow G$ satisfying $H_{i}(a,0)=a$ and $H_{i}(a,1)=e$, $a\in N_i$, for all $i\in\{0,1\Rojo{,}\ldots,n\}$ (here $e$ denotes the neutral element of $G$). We can assume that $e\notin N_i$ for all $i>0$ and $H_{0}(e,t)=e$ for all $t\in [0,1]$, where the latter requirement follows from~\cite[Lemma 1.25]{CLOT} and the fact that $\{e\}\hookrightarrow G$ is a cofibration (recall \rouge{that} $\CW$ complexes have non-degenerate base points). For each $i\in\{0,1\Rojo{,}\ldots,n\}$, set $V_{i}:=\{(a,b)\in G\times G\colon b^{-1}a\in N_i\}$. On each $V_i$ of the open cover $\{V_{0},\ldots,V_{n}\}$ of $G\times G$ \magenta{we have the} continuous section $s_{i}\colon V_{i}\rightarrow G^{[0,1]}$ of \colorado{$e_{01}$} defined by $s_{i}(a,b)(t)=bH_{i}(b^{-1}a,t)$, $t\in[0,1]$. Note that $\Delta G\cap V_i=\varnothing$ for all $i\in \{1,\ldots,n\}$ and $s_0(a,a)(t)=aH_{0}(a^{-1}a,t)=aH_{0}(e,t)=ae=a$ with $(a,a)\in\Delta G$ and $t\in[0,1]$. Therefore $\TC^{\colorado{FH}}(G)\leq \cat(G)$ and the result follows.
\end{proof}

\subsection{Relative category}
We start by recalling the definition of the join of two maps having the same target. Let $f\colon X\to Z$ and $g\colon Y\to Z$ be maps, the join of $f$ and $g$, denoted \rouge{by} $X\ast_Z Y$, is defined as the homotopy pushout of the homotopy pullback of $f$ and $g$,
\begin{center}
\begin{tikzpicture}[commutative diagrams/every diagram]
\matrix[matrix of math nodes, name=m, commutative diagrams/every cell, column sep=4ex]
    {\bullet&&Y\\
     &X\ast_Z Y&\\
    X&&Z,\\};
\path[commutative diagrams/.cd, every arrow, every label]
(m-1-1) edge (m-1-3)
(m-1-1) edge (m-3-1)
(m-3-1) edge (m-2-2)
(m-1-3) edge node[right] {$g$} (m-3-3)
(m-1-3) edge (m-2-2)
(m-3-1) edge node[below] {$f$} (m-3-3)
(m-2-2) edge[commutative diagrams/dashed] (m-3-3);
\end{tikzpicture}
\end{center}
where the dashed arrow, called join map or whisker map, is given by the weak universal property of the homotopy pushout.

The previous definition enables us to set forth the main notion of this section, the relative category of a map as introduced in~\cite{DoeHao}. \colorado{The $n$-th Ganea map of a given map $i_X\colon A\to X$, denoted \rouge{by}} $g_n\colon G_n(\colorado{i_X})\to X$ $(n\geq 0)$, is the join map inductively defined by the join construction
\begin{center}
\begin{tikzpicture}[commutative diagrams/every diagram]
\matrix[matrix of math nodes, name=m, commutative diagrams/every cell, column sep=4ex]
    {\bullet&&A\\
     &G_n(\colorado{i_X})&\\
    G_{n-1}(\colorado{i_X})&&X,\\};
\path[commutative diagrams/.cd, every arrow, every label]
(m-1-1) edge (m-1-3)
(m-1-1) edge (m-3-1)
(m-3-1) edge (m-2-2)
(m-1-3) edge node[right] {$i_X$} (m-3-3)
(m-1-3) edge node[above left] {$\alpha_n$} (m-2-2)
(m-3-1) edge node[below] {$g_{n-1}$} (m-3-3)
(m-2-2) edge[commutative diagrams/dashed] node[above right] {$g_{n}$} (m-3-3);
\end{tikzpicture}
\end{center}
where $g_0:=i_X$ and, if $g_{n-1}\colon G_{n-1}(\colorado{i_X})\to X$ is already given, $G_n(\colorado{i_X})$ is the join of $i_X$ and $g_{n-1}$. Then, the \textit{relative category} of $i_X$, denoted \rouge{by} $\relcat(i_X)$, is defined as the least nonnegative integer~$n$ such that $g_n\colon G_n(\colorado{i_X})\to X$ admits a homotopy section $\sigma\colon X\to G_n(\colorado{i_X})$ satisfying $\sigma\circ i_X\simeq \alpha_n$. 

Doeraene and El Haouari proved in~\cite{DoeHao} that the relative category of a map possesses a \rouge{Whitehead-type} characterization in terms of the $n$-th sectional fat-wedge $t_n\colon T^n(i_X)\to X^{n+1}$ of $i_X$ (see Theorem~\ref{thm:Doe-Hao} below), which is inductively defined as follows: For $n=0$, set $T^0(i_X):=A$ and $t_0=i_X\colon A\to X$. If $t_{n-1}\colon T^{n-1}(i_X)\to X^n$ is already defined, then $t_n$ is the join map \rouge{rendering a homotopy} commutative diagram
\begin{center}
\begin{tikzpicture}[commutative diagrams/every diagram]
\matrix[matrix of math nodes, name=m, commutative diagrams/every cell, column sep=4ex]
    {\bullet&&X^n\times A\\
     &T^n(i_X)&\\
    T^{n-1}(i_X)\times X&&X^{n+1},\\};
\path[commutative diagrams/.cd, every arrow, every label]
(m-1-1) edge (m-1-3)
(m-1-1) edge (m-3-1)
(m-3-1) edge (m-2-2)
(m-1-3) edge node[right] {$1_{X^n}\times i_X$} (m-3-3)
(m-1-3) edge (m-2-2)
(m-3-1) edge node[below] {$t_{n-1}\times 1_X$} (m-3-3)
(m-2-2) edge[commutative diagrams/dashed] node[above right] {$t_{n}$} (m-3-3);
\end{tikzpicture}
\end{center}
where $T^n(i_X)$ is the join of $t_{n-1}\times 1_X$ and $1_{X^n}\times i_X$. 

\colorado{The} next result pieces together \cite[Proposition~26]{DoeHao} and~\cite[Corollary~11]{CalcinesVan}. We have chosen the statement in \thmref{thm:Doe-Hao} \rouge{below} \colorado{for multiple reasons. To start with, the $n$-th sectional fat-wedge and the formulas \colorado{for} all maps appearing in diagram~(\ref{commutative diagram up to homotopy}) \colorado{below} have a simple description} in the case when $i_X:A\toi X$ is a cofibration. \colorado{On} the other hand, we are \colorado{primarily} interested in the case of the diagonal inclusion in $X\times X$. Nonetheless, we remark that the characterization of relative category given by Doeraene and El Haouari in \cite[Proposition 26]{DoeHao} applies for \emph{any} map $i_X:A\to X$.

\begin{theorem}\label{thm:Doe-Hao}
Let $i_X\colon A\toi X$ be a cofibration. We have $\relcat(i_X)\leq n$ if and only if there exits a map $f\colon X\to T^n(i_X)$ \colorado{fitting in a homotopy} commutative diagram
\begin{equation}\label{commutative diagram up to homotopy}
\begin{tikzpicture}[commutative diagrams/every diagram]
\matrix[matrix of math nodes, name=m, commutative diagrams/every cell, column sep=4ex]
    {A&&T^n(i_X)\\
    X&&X^{n+1},\\};
\path[commutative diagrams/.cd, every arrow, every label]
(m-1-1) edge node[above] {$\tau_n$} (m-1-3)
(m-1-1) edge[commutative diagrams/hook] node[left] {$i_X$} (m-2-1)
(m-1-3) edge[commutative diagrams/hook] node[right] {$t_n$} (m-2-3)
(m-2-1) edge[commutative diagrams/dashed] node[above] {$f$} (m-1-3)
(m-2-1) edge node[below] {$\Delta_{n+1}$} (m-2-3);
\end{tikzpicture}
\end{equation}
where $\Delta_{n+1}$ is the diagonal map, $\tau_{n}=\Delta_{n+1}|_A$, and $t_n:T^n(i_X)\toi X^{n+1}$ is the inclusion of the subspace $T^n(i_X)=\{(x_0,\ldots,x_n)\in X^{n+1}\colon x_i\in A \text { for some }i\,\}$.
\end{theorem}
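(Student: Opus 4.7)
The plan is to reduce the statement to the interplay between the Ganea tower and the sectional fat-wedge tower, exploiting the fact that $i_X$ is a cofibration in order to replace all homotopy (co)limits by strict ones.

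First I would argue, by induction on $n$, that since $i_X\colon A\hookrightarrow X$ is a cofibration, the iterated join construction defining $t_n\colon T^n(i_X)\to X^{n+1}$ produces (up to canonical homotopy equivalence) the strict subspace $T^n(i_X)=\{(x_0,\ldots,x_n)\in X^{n+1}\colon x_i\in A\text{ for some }i\}$ with $t_n$ the inclusion. The key point is that the map $1_{X^n}\times i_X$ in the defining diagram is again a cofibration (products of cofibrations with identities are cofibrations on nice enough spaces), so the homotopy pushout computing the join agrees with the strict pushout, which by direct inspection is the union $(T^{n-1}(i_X)\times X)\cup (X^n\times A)$ inside $X^{n+1}$.

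Second, I would compare the two towers. Both $G_n(i_X)\to X$ and the pullback of $t_n$ along $\Delta_{n+1}\colon X\to X^{n+1}$ are built by the same join recipe, but \emph{over} $X$ in the first case and relative to $X^{n+1}$ (restricted to the diagonal) in the second. An inductive argument shows that $g_n\colon G_n(i_X)\to X$ is (weakly) equivalent to the pullback of $t_n$ along $\Delta_{n+1}$, and under this equivalence the structural map $\alpha_n\colon A\to G_n(i_X)$ corresponds to the map induced on pullbacks by $\tau_n=\Delta_{n+1}|_A\colon A\to T^n(i_X)$ together with $i_X\colon A\to X$. This is the step I expect to be the main obstacle, as one must keep track of the compatibility of the structural maps at each stage; the Mather cube theorem or the standard cofibration-versus-fibration techniques of Doeraene--El Haouari should handle it.

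With this comparison in hand, a homotopy section $\sigma\colon X\to G_n(i_X)$ of $g_n$ with $\sigma\circ i_X\simeq\alpha_n$ is equivalent, by the universal property of the homotopy pullback, to a map $f\colon X\to T^n(i_X)$ together with a homotopy $t_n\circ f\simeq \Delta_{n+1}$ such that $f\circ i_X\simeq\tau_n$ (compatibly). That is precisely the existence of the lift in diagram (\ref{commutative diagram up to homotopy}). Conversely, given such an $f$, the pair $(f,\Delta_{n+1})$ induces, via the homotopy pullback, the desired homotopy section $\sigma$ satisfying $\sigma\circ i_X\simeq\alpha_n$. Combining both implications yields the stated equivalence; the explicit cofibration hypothesis is what permits the concrete identification of $T^n(i_X)$ and guarantees that all the ``up to homotopy'' data above can be rectified when needed.
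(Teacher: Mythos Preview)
The paper does not actually prove this theorem: it is stated as a known result and attributed to \cite[Proposition~26]{DoeHao} and \cite[Corollary~11]{CalcinesVan}, with the remark that the cofibration hypothesis is what makes the explicit subspace description of $T^n(i_X)$ possible. Your outline is essentially the route taken in those references---identify $T^n(i_X)$ strictly via the cofibration hypothesis, compare the Ganea tower with the pullback of $t_n$ along $\Delta_{n+1}$, and translate sections across that comparison---so there is nothing substantively different to contrast.

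One caution: your step~2 is where all the content lies, and you correctly flag it as the main obstacle while deferring to Doeraene--El Haouari and the Mather cube. That is fine as a plan, but as written your proposal is closer to a roadmap citing the same sources the paper cites than to an independent proof; if you intend to supply a self-contained argument you would need to carry out the inductive compatibility of $\alpha_n$ with the pullback comparison explicitly.
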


\section{The Fadell-Husseini monoidal topological complexity}\label{sec:FHMTC}
The main goal of this section is to \colorado{complete the proof of} \thmref{Iwase=Dranish=GC}. By \colorado{Remark~\ref{TCM=TCDM=TCFHM} and the work in~\cite{JoseManuel},} the equality $\TC^M(X)=\TC^{\colorado{FH}}_g(X)$ is the only one requiring argumentation. The crux of the proof relies \magenta{on} a Fadell-Husseini type definition of the notions $\relcat_{op}(i_X)$ and $\relcat_g(i_X)$ for a cofibration $i_X:A\toi X$ (see Remarks~\ref{def:relcatop} and~\ref{def:relcatg}).

\begin{definition}\label{def:relcatFH}
Let $i_{X}:A\hookrightarrow X$ be a cofibration. \rouge{A} subset $U$ of $X$ is $A$-relatively sectional if $A\subseteq U$ and there exists a homotopy of pairs $H\colon(U,A)\times [0,1]\to(X,A)$ such that $H(x,0)=x$ and $H(x,1)\in A$ for all $x\in U$. \colorado{The Fadell-Husseini relative category of $i_X$, denoted \rouge{by}} $\relcat^{FH}_{op}(i_X)$, \rouge{is} the least nonnegative integer $n$ such that $X$ admits an open cover $\{U_i\}_{i=0}^n$ satisfying:
\begin{enumerate}
\item $U_{0}$ is $A$-relatively sectional;
\item for $i\geq 1$, $U_i\cap A=\varnothing$ and there are homotopies $H_i\colon U_{i}\times [0,1]\to X$ with $H_{i}(x,0)=x$ and $H_i(x,1)\in A$ for all $x\in U_i$.
\end{enumerate}
If such an integer does not exist, we set $\relcat_{op}^{FH}(i_X)=\infty$.
\end{definition}

\begin{remark}\label{def:relcatop}
If each $U_{i}$ is required to be $A$-relatively sectional in Definition~\ref{def:relcatFH}, we obtain the notion of $\relcat_{op}(i_X)$. The latter concept agrees with the relative category of $i_X$ because of~\cite[Theorem 1.6]{JoseManuel}. In the next proposition we show that \rouge{the equality} $\relcat(i_X)=\relcat_{op}^{FH}(i_X)$ holds as well by following techniques \colorado{similar} to those exposed in~\cite{JoseManuel}.
\end{remark}

\begin{proposition}\label{pro:relcat=relcatopFH}
Let $X$ be a normal space. If $i_{X}\colon A\hookrightarrow X$ is a cofibration, then $\relcat(i_X)=\relcat_{op}^{FH}(i_X)$.
\end{proposition}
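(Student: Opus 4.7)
The plan is to establish the two inequalities separately. The easy direction is $\relcat^{FH}_{op}(i_X)\leq\relcat(i_X)$, for which I would first invoke \cite[Theorem~1.6]{JoseManuel} to rewrite $\relcat(i_X)=\relcat_{op}(i_X)$, and then start from an open cover $\{U_0,\ldots,U_n\}$ of $X$ realizing $\relcat_{op}(i_X)$ (each $U_i$ being $A$-relatively sectional). Since $X$ is normal (hence $T_1$ and Hausdorff) and $i_X$ is a cofibration, $A$ is closed in $X$ by Str{\o}m's theorem, so each $U_i\setminus A$ $(i\geq 1)$ is open. The family $\{U_0\}\cup\{U_i\setminus A\}_{i\geq 1}$ still covers $X$ (because $A\subseteq U_0$), condition~(1) of Definition~\ref{def:relcatFH} holds by hypothesis on $U_0$, and condition~(2) follows by restricting each $A$-relatively sectional homotopy $H_i$ to $(U_i\setminus A)\times[0,1]$.

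For the harder direction $\relcat(i_X)\leq\relcat^{FH}_{op}(i_X)$, I would use the Whitehead-type characterization in Theorem~\ref{thm:Doe-Hao}. Starting from a cover $\{U_0,\ldots,U_n\}$ realizing $\relcat^{FH}_{op}(i_X)=n$, with homotopies $H_i$ as in Definition~\ref{def:relcatFH}, normality (shrinking lemma plus Urysohn) yields open sets $W_i\subseteq\overline{W_i}\subseteq V_i\subseteq\overline{V_i}\subseteq U_i$ with $\bigcup W_i=X$, together with continuous $\mu_i\colon X\to[0,1]$ satisfying $\mu_i\equiv1$ on $\overline{W_i}$ and $\mu_i\equiv0$ on $X\setminus V_i$. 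I would then define $f\colon X\to X^{n+1}$ coordinatewise by
\[
f_i(x)=\begin{cases} H_i(x,\mu_i(x)), & x\in U_i;\\ x, & x\notin V_i,\end{cases}
\]
the two definitions agreeing on the overlap $U_i\cap(X\setminus V_i)$ since $\mu_i$ vanishes there and $H_i(x,0)=x$. The closed pasting lemma applied to $\overline{V_i}$ and $X\setminus V_i$ gives continuity of each $f_i$.

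Next I would verify the three conditions of Theorem~\ref{thm:Doe-Hao}. First, $f(X)\subseteq T^n(i_X)$: every $x\in X$ lies in some $W_j$, so $\mu_j(x)=1$ and $f_j(x)=H_j(x,1)\in A$. Second, the obvious straight-line homotopy $F_i(x,t)=H_i(x,t\mu_i(x))$ on $U_i$ and $F_i(x,t)=x$ on $X\setminus V_i$ glues (by the same pasting argument) to a homotopy $F\colon X\times[0,1]\to X^{n+1}$ from $\Delta_{n+1}$ to $t_n\circ f$. Third, I would check that the restriction of $F$ to $A\times[0,1]$ actually lands in $T^n(i_X)$, providing the homotopy $f\circ i_X\simeq\tau_n$ in $T^n(i_X)$: for $a\in A$, the inclusion $A\subseteq U_0$ means $F_0(a,t)=H_0(a,t\mu_0(a))$, and because $U_0$ is $A$-relatively sectional (so $H_0$ is a homotopy of pairs), this value lies in $A$ for all $t$, forcing $F(a,t)\in T^n(i_X)$.

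The main obstacle is precisely this last point: keeping the restricted homotopy inside $T^n(i_X)$ rather than merely in $X^{n+1}$. This is exactly where the asymmetry of Definition~\ref{def:relcatFH} pays off, since we only get the pair-homotopy property from the distinguished set $U_0$. Everything else reduces to standard normal-space technology (shrinking lemma, Urysohn, closed pasting), the use of which is warranted by the hypothesis on $X$ and parallels the techniques of \cite{JoseManuel} alluded to in Remark~\ref{def:relcatop}.
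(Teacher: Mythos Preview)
Your proposal is correct and follows essentially the same route as the paper's proof: both directions are handled identically, with the harder inequality obtained by shrinking the cover via normality, building Urysohn cut-off functions, and assembling a map into $T^n(i_X)$ to invoke Theorem~\ref{thm:Doe-Hao}. Your extra remark that $A$ is closed (via Str{\o}m) to ensure the sets $U_i\setminus A$ remain open is a point the paper leaves implicit, and your observation that the pair-homotopy property of $H_0$ alone forces $F_0(a,t)\in A$ (without needing $A$ inside the innermost shrunken set) is a slight streamlining of the paper's argument, which arranges $A\subseteq A_0$ explicitly.
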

\begin{proof}
\colorado{Let $n:=\relcat(i_X)$, which agrees with $\relcat_{op}(i_X)$ in view of~\cite[Theorem 1.6]{JoseManuel}. Choose} an open cover $\{U_0,\ldots,U_n\}$ of $X$ such that, for any $i\geq 0$, $U_i$ is $A$-relatively sectional. Then there are homotopies of pairs $H_i\colon (U_i,A)\times[0,1]\to (X,A)$ such that $H_i(x,0)=x$ and $H_i(x,1)\in A$ for all $x\in U_i$. It is clear that, \colorado{by} setting $U^{*}_{0}:=U_0$, $U^{*}_i:=U_i\setminus A$ and \colorado{by} restricting the homotopies $H_i$ to $U^{*}_i$ for $i\geq 1$, the two items of Definition~\ref{def:relcatFH} are fulfilled. \colorado{Thus $$\relcat^{FH}_{op}(i_X)\leq \relcat(i_X).$$ Next}, in order to prove \colorado{the opposite inequality,} let $\colorado{m:={}}\relcat_{op}^{FH}(i_X)$ and consider an open cover $\{U_i\}_{i=0}^{\colorado{m}}$ of $X$ such that:
\begin{enumerate}
\item $U_{0}$ is $A$-relatively sectional, i.e., $A\subseteq U_0$ and there exists a homotopy of pairs $H_{0}:(U_0,A)\times[0,1]\to (X,A)$ with $H_{0}(x,0)=x$ and $H_0(x,1)\in A$ for all $x\in U_0$;
\item for $i\geq 1$, $U_i\cap A=\varnothing$ and there are homotopies $H_i\colon U_{i}\times [0,1]\to X$ with $H_{i}(x,0)=x$ and $H_i(x,1)\in A$ for all $x\in U_i$.
\end{enumerate}
Since $X$ is a normal space, there are closed sets $A_i$, $B_i$ and open sets $\Theta_i$ \rojo{($i=0,\ldots,\colorado{m}$)} fulfilling \rojo{$A\subseteq A_0$} and $A_i\subseteq\Theta_i\subseteq B_i\subseteq U_i$ \rojo{for all $i$,} with $\{A_i\}_{i=0}^{\colorado{m}}$ \rojo{covering} $X$. Furthermore, there exist Urysohn maps $h_i\colon X\to [0,1]$ such that $h_i(A_i)=\{1\}$ and $h_i(X\setminus\Theta_i)=\{0\}$. For $i\geq 0$, let $L_i\colon (X,A)\times [0,1]\to (X,A)$ be the continuous map defined by 
\[
L_i(x,t)=\begin{cases}
x,&\text{ if }x\in X\setminus B_i;\\
H_i(x,h_i(x)\cdot t),&\text{ if }x\in U_i.
\end{cases}
\]
Observe that $L_i$ is well-defined because $x=H_i(x,h_i(x)\cdot t)$ for all $x\in U_i\setminus B_i$. Likewise, the facts $L_{0}(a,t)=H_{0}(a,t)\in A$ and $L_i(a,t)=a$ for $t\in[0,1]$, $a\in A$ and $i\geq 1$ (recall, $U_i\cap A=\varnothing$ for $i\geq 1$) imply that \red{$L_i$ restricts to a map} $A\times[0,1]\to A$. \colorado{Then} define $L:(X,A)\times[0,1]\to (X^{\colorado{m}+1},T^{\colorado{m}}(i_X))$ \colorado{by} $L:=(L_0,\ldots,L_{\colorado{m}})$. Since $\{A_i\}_{i=0}^{\colorado{m}}$ covers $X$, we get a well-defined map $f:X\to T^{\colorado{m}}(i_X)$ by setting $f(x):=L(x,1)$. Such a map satisfies $L:\Delta_{\colorado{m}+1}\simeq t_{\colorado{m}}\circ f$ and $L|_{A\times [0,1]}:\tau_{\colorado{m}}\simeq f\circ i_X$. Therefore, by Theorem~\ref{thm:Doe-Hao}, $\relcat(i_X)\leq \colorado{\relcat^{FH}_{op}(i_X).}$
\end{proof}

\red{We now} discuss the \colorado{Fadell-Husseini generalized relative category} $\relcat^{FH}_g(i_X)$, which is determined just as in Definition~\ref{def:relcatFH}, \colorado{but without requiring that the covers should consist of open sets.} We show that, under \rouge{mild} hypotheses, \colorado{dropping such a condition is immaterial} (Proposition~\ref{thm:relcat=relcatgFH} below).

\begin{definition}\label{def:relcatgFH}
Let $i_{X}:A\hookrightarrow X$ be a cofibration. We define \colorado{the Fadell-Husseini generalized relative category of $i_X$, denoted by} $\relcat_{g}^{FH}(i_X)$, as the least nonnegative integer $n$ such that $X$ admits a (not necessarily open) \red{cover} $\{U_0,\ldots,U_n\}$ satisfying:
\begin{enumerate}
\item $U_{0}$ is $A$-relatively sectional;
\item for $i\geq 1$, $U_i\cap A=\varnothing$ and there are homotopies $H_i\colon U_{i}\times [0,1]\to X$ with $H_{i}(x,0)=x$ and $H_i(x,1)\in A$ for all $x\in U_i$.
\end{enumerate}
\rouge{We set $\relcat^{FH}_{g}(i_X)=\infty$ if the required coverings do} not exist.
\end{definition}

\begin{remark}\label{def:relcatg}
If each subset $U_i$ \colorado{in Definition~\ref{def:relcatgFH}} is required to be $A$-relative sectional, we obtain the notion of $\relcat_{g}(i_X)$. \red{In view of~\cite[Theorem~2.16]{JoseManuel}}, the latter concept coincides with $\relcat(i_X)$ provided $i_{X}\colon A\toi X$ is a cofibration between $\ANR$ spaces. Paralleling the proof of such a fact, we will prove \colorado{an analogous} conclusion in the \colorado{Fadell-Husseini} context.
\end{remark}
 
Before delving into the equality $\relcat(i_X)=\relcat_{g}^{FH}(i_X)$, we \red{prove} the following technical \colorado{fact} (cf.~\cite[Lemma 2.14]{JoseManuel}):

\begin{lemma}\label{technical lemma}
Let $i_X\colon A\hookrightarrow X$ be a cofibration with $X$ a normal space. Assume that 
\begin{enumerate}
\item $\{U_{0},\ldots,U_n\}$ is an open cover of $X$;
\item $A\subseteq U_{0}$ and there is a homotopy $H_0\colon U_0\times [0,1]\to X$ so that $H_0(x,0)=x$, $H_0(x,1)\in A$, and $H_{0}(-,1)|_A\simeq 1_A$;
\item for $i\geq 1$, $U_i\cap A=\varnothing$ and there are homotopies $H_i\colon U_{i}\times [0,1]\to X$ with $H_{i}(x,0)=x$ and $H_i(x,1)\in A$ for any $x\in U_i$.
\end{enumerate}
\red{Then} $\relcat(i_X)\leq n$.
\end{lemma}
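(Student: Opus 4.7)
The plan is to adapt the proof of \propref{pro:relcat=relcatopFH} to the present weaker hypothesis, exhibiting a map $f\colon X\to T^n(i_X)$ that makes diagram~(\ref{commutative diagram up to homotopy}) of \thmref{thm:Doe-Hao} homotopy-commute. Unlike in \propref{pro:relcat=relcatopFH}, $H_0$ is no longer required to restrict to a homotopy of pairs $(U_0,A)\times[0,1]\to(X,A)$, so the zeroth coordinate of the intended $f$ will have to be absorbed by the remaining coordinates; the extra hypothesis $H_{0}(-,1)|_{A}\simeq 1_A$ is only called upon to handle the degenerate range $n=0$, in which no auxiliary coordinate is available for this absorption.

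Suppose first that $n\geq 1$. By normality of $X$, together with closedness of $A$ (a standard consequence of $i_X$ being a cofibration into a $T_1$ space), I would choose closed sets $A_i,B_i$ and open sets $\Theta_i$ ($0\leq i\leq n$) with $A\subseteq A_0$ and $A_i\subseteq\Theta_i\subseteq B_i\subseteq U_i$, while keeping $\{A_i\}_{i=0}^{n}$ a cover of $X$, and fix Urysohn maps $h_i\colon X\to[0,1]$ with $h_i(A_i)=\{1\}$ and $h_i(X\setminus\Theta_i)=\{0\}$. The continuous homotopies
\[
L_i(x,t)=\begin{cases}x, & x\in X\setminus B_i,\\ H_i(x,h_i(x)\cdot t), & x\in U_i,\end{cases}
\]
are then defined just as in the proof of \propref{pro:relcat=relcatopFH}. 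The decisive observation is that for $i\geq 1$ and $a\in A$, the disjointness $U_i\cap A=\varnothing$ forces $a\in X\setminus B_i$, so $L_i(a,t)=a\in A$ for every $t\in[0,1]$. Consequently, the product $L:=(L_0,\ldots,L_n)\colon X\times[0,1]\to X^{n+1}$ sends $A\times[0,1]$ into $T^n(i_X)$, because the last $n$ coordinates of $L(a,t)$ already lie in $A$ regardless of whether the zeroth coordinate $L_0(a,t)=H_0(a,t)$ does.

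From this point on the argument unfolds verbatim as in \propref{pro:relcat=relcatopFH}: since $\{A_i\}$ covers $X$, for each $x\in X$ there is an index $i$ with $L_{i}(x,1)=H_i(x,1)\in A$, so $f(x):=L(x,1)$ defines a continuous map $X\to T^n(i_X)$; $L$ itself yields the lower-triangle homotopy $\Delta_{n+1}\simeq t_n\circ f$ in $X^{n+1}$, while $L|_{A\times[0,1]}$ yields the upper-triangle homotopy $\tau_n\simeq f\circ i_X$ in $T^n(i_X)$. \thmref{thm:Doe-Hao} then delivers $\relcat(i_X)\leq n$.

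The case $n=0$, where the cover reduces to $\{U_0\}=\{X\}$ and no auxiliary $L_i$ exists, has to be handled separately: one sets $f:=H_{0}(-,1)\colon X\to A=T^0(i_X)$, in which case the hypothesis $H_{0}(-,1)|_A\simeq 1_A$ supplies the upper-triangle homotopy $f\circ i_X\simeq\tau_0=1_A$ in $A$, while $H_0$ itself supplies the lower-triangle homotopy $\Delta_1=1_X\simeq i_X\circ f=t_0\circ f$ in $X$. The principal obstacle throughout is the verification that $L|_{A\times[0,1]}$ (or its $n=0$ substitute) is valued in $T^n(i_X)$ rather than merely in $X^{n+1}$; this is the one place where the failure of $H_0$ to preserve $A$ could, in principle, disrupt the construction.
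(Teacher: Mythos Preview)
Your proof is correct and follows the same overall strategy as the paper's: both invoke the Whitehead-type characterization of \thmref{thm:Doe-Hao} and build the maps $L_i$ exactly as in the proof of \propref{pro:relcat=relcatopFH}, using $L=(L_0,\ldots,L_n)$ for the lower-triangle homotopy $\Delta_{n+1}\simeq t_n\circ f$. The one substantive difference lies in how the upper-triangle homotopy $\tau_n\simeq f\circ i_X$ is produced. The paper invokes the hypothesis $H_0(-,1)|_A\simeq 1_A$ \emph{uniformly} in $n$: letting $G_0\colon A\times[0,1]\to A$ witness that homotopy, it uses $G:=(G_0,L_1|_{A\times[0,1]},\ldots,L_n|_{A\times[0,1]})$, which lands in $A^{n+1}\subseteq T^n(i_X)$. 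Your sharper observation is that, for $n\geq 1$, the restriction $L|_{A\times[0,1]}$ \emph{already} lands in $T^n(i_X)$, because the coordinates $L_i(a,t)=a\in A$ for $i\geq1$ suffice to place the tuple in the fat wedge regardless of the zeroth coordinate; hence the hypothesis $H_0(-,1)|_A\simeq 1_A$ is superfluous there and is genuinely needed only when $n=0$. This buys you a slightly stronger statement in the range $n\geq1$ at the cost of a case split, while the paper's argument is uniform in $n$ but consumes the hypothesis even where it is not strictly required.
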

\begin{proof}
The first half \red{of the argument} follows the constructions in the proof of Proposition~\ref{pro:relcat=relcatopFH}: \rojo{Let $A_i, B_i$ be closed sets and $\Theta_i$ be open sets ($i=0,\ldots,n$), with $\{A_i\}_{i=0}^n$ covering $X$, such that $A\subseteq A_0$ and $A_i\subseteq\Theta_i\subseteq B_i\subseteq U_i$ for all $i$.} Choose Urysohn maps $h_i\colon X\to [0,1]$ such that $h_i(A_i)=\{1\}$ and $h_i(X\setminus\Theta_i)=\{0\}$. For $i\geq 0$, \rojo{let} $L_i\colon X\times [0,1]\to X$ \rojo{be defined by}
\[
L_i(x,t)=\begin{cases}
x,&\text{ if }x\in X\setminus B_i;\\
H_i(x,h_i(x)\cdot t),&\text{ if }x\in U_i.
\end{cases}
\]
Set $L:=(L_0,\ldots,L_n)\colon X\times[0,1]\to X^{n+1}$ and note that we still have $L_{i}|_{A\times[0,1]}:A\times[0,1]\to A$, as well as $L:\Delta_{n+1}\simeq t_n\circ f$, where $$\colorado{f:=L(-,1)}\colon X\to T^n(i_X).$$
On the other hand, let $G_0:A\times[0,1]\to A$ be the homotopy between $1_A$ and $H_{0}(-,1)|_A$, this is, $G_0(a,0)=a$ and $G_0(a,1)=H_{0}(a,1)$ for all $a\in A$. Define $G\colon A\times[0,1]\to A^{n+1}\subseteq T^n(i_X)$ to be $G=(G_0,L_{1}|_{A\times [0,1]},\ldots,L_{n}|_{A\times [0,1]})$. Observe that $L_0(a,1)=H_{0}(a,1)=G_{0}(a,1)$, so $G:\tau_n\simeq f\circ i_X$. Therefore, the desired inequality $\relcat(i_X)\leq n$ comes from Theorem~\ref{thm:Doe-Hao}.
\end{proof}

\begin{proposition}\label{thm:relcat=relcatgFH}
Let $i_X\colon A\hookrightarrow X$ be a cofibration between $\ANR$ spaces. We have $\relcat(i_X)=\relcat_{g}^{FH}(i_X)$.
\end{proposition}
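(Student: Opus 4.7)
The inequality $\relcat_g^{FH}(i_X)\leq \relcat(i_X)$ is immediate: an $\ANR$ space is metrizable, hence normal, so by \propref{pro:relcat=relcatopFH} we have $\relcat(i_X)=\relcat_{op}^{FH}(i_X)$, and every open cover qualifies as a cover in \defref{def:relcatgFH}. The substance of the proof therefore lies in establishing the reverse inequality $\relcat(i_X)\leq \relcat_g^{FH}(i_X)$. Let $n:=\relcat_g^{FH}(i_X)$ and choose a witness cover $\{U_0,\ldots,U_n\}$ of $X$ together with homotopies $H_i\colon U_i\times[0,1]\to X$ fulfilling conditions~(1) and~(2) of \defref{def:relcatgFH}. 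The plan is to upgrade this data to an \emph{open} cover $\{V_0,\ldots,V_n\}$ with thickened deformations $\tilde{H}_i$ satisfying the hypotheses of \lemref{technical lemma}, and then to invoke that lemma.

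Paralleling the strategy used by Garc\'ia-Calcines in \cite[Theorem~2.16]{JoseManuel}, the key step is to extend each homotopy $H_i$ to a continuous homotopy defined on an open neighborhood of $U_i$. For this we exploit the $\ANR$ structure of $X$ (and of $A$) in conjunction with the cofibration $i_X$. Concretely, via exponential adjunction each $H_i$ corresponds to a continuous map $\hat{H}_i\colon U_i\to P_A X$, where $P_A X:=\{\gamma\in X^{[0,1]}\colon\gamma(1)\in A\}$, and $\hat{H}_i$ is a partial section of the start-point evaluation $e_0\colon P_A X\to X$. Since $A\toi X$ is a cofibration between $\ANR$s, the space $P_A X$ inherits the $\ANR$ property, which allows us to extend each $\hat{H}_i$ to a continuous map defined on an open neighborhood $V_i$ of $U_i$, producing the desired deformation $\tilde{H}_i\colon V_i\times[0,1]\to X$ with $\tilde{H}_i(x,0)=x$ and $\tilde{H}_i(x,1)\in A$. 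For $i\geq 1$ we shrink $V_i$ if necessary so that $V_i\cap A=\varnothing$, using that $X\setminus A$ is open (as $A$ is closed, being the image of a cofibration into a Hausdorff $\ANR$). For $i=0$, the additional hypothesis $H_0(-,1)|_A\simeq 1_A$ demanded by \lemref{technical lemma} is automatic: the original $H_0$ is $A$-relatively sectional, so $H_0|_{A\times[0,1]}$ is itself a homotopy in $A$ between $1_A$ and $H_0(-,1)|_A$.

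The main obstacle is precisely the extension step outlined above: since the sets $U_i$ need not be open (or even closed), extending sections from $U_i$ to an open neighborhood is not formal and requires careful use of the $\ANR$ structure on both $X$ and the path space $P_A X$, together with the bookkeeping needed to preserve the boundary conditions $\tilde{H}_i(-,0)=\mathrm{id}$ and $\tilde{H}_i(-,1)\in A$ throughout. Once the open cover $\{V_0,\ldots,V_n\}$ and the extended homotopies $\tilde{H}_i$ are in place, \lemref{technical lemma} produces a map $f\colon X\to T^n(i_X)$ fitting into the homotopy commutative square of \thmref{thm:Doe-Hao}, whence $\relcat(i_X)\leq n=\relcat_g^{FH}(i_X)$, completing the proof.
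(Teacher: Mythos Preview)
Your proposal is correct and follows essentially the same approach as the paper: the path space $P_A X$ you introduce is homeomorphic (via path reversal) to the mapping cocylinder $\widehat{A}$ the paper employs, the section-extension step you outline is precisely the content of \cite[Theorem~2.7]{JoseManuel} invoked there, and both arguments conclude by feeding the resulting open cover into \lemref{technical lemma}. The only cosmetic difference is that the paper verifies the hypothesis $G_0(-,1)|_A\simeq 1_A$ via the homotopy inverse $j'$ of the equivalence $j\colon A\to\widehat{A}$, whereas you (equivalently) deduce it from the $A$-relatively sectional structure of the original $H_0$ together with the fact that the extended section agrees with $\hat H_0$ on $U_0$ up to homotopy.
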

\begin{proof}
Clearly $\relcat_{g}^{FH}(i_X)\leq \relcat_{op}^{FH}(i_X)=\relcat(i_X)$, where the latter relation holds in view of Proposition~\ref{pro:relcat=relcatopFH} (recall \magenta{that} $X$ is a normal space since it is metrizable). \red{We} show the inequality $\relcat(i_X)\leq\relcat_{g}^{FH}(i_X)$.

\red{Let} $n:=\relcat_{g}^{FH}(i_X)$ and consider a (not necessarily open) \red{cover} $\{U_i\}_{i=0}^{n}$ of $X$ such that:
\begin{enumerate}
\item $U_{0}$ is $A$-relatively sectional, i.e., $A\subseteq U_0$ and there exists a homotopy of pairs $H_{0}:(U_0,A)\times[0,1]\to (X,A)$ with $H_{0}(x,0)=x$ and $H_0(x,1)\in A$ for all $x\in U_0$;
\item for $i\geq 1$, $U_i\cap A=\varnothing$ and there are homotopies $H_i\colon U_{i}\times [0,1]\to X$ with $H_{i}(x,0)=x$ and $H_i(x,1)\in A$ for all $x\in U_i$.
\end{enumerate}
The argument below for $i=0$ is the one in the proof of~\cite[Theorem 2.16]{JoseManuel}. We review the details since we will then describe a slight variant in order to deal with the case of $i>0$. Consider the following factorization of $i_{X}$ through its mapping cocylinder:
\begin{center}
\begin{tikzpicture}[commutative diagrams/every diagram]
\matrix[matrix of math nodes, name=m, commutative diagrams/every cell, column sep=4ex]
    {A&&X\\
    &\widehat{A},&\\};
\path[commutative diagrams/.cd, every arrow, every label]
(m-1-1) edge[commutative diagrams/hook] node[above] {$i_X$} (m-1-3)
(m-1-1) edge node[below left] {$j$} (m-2-2)
(m-2-2) edge node[below right] {$p$} (m-1-3);
\end{tikzpicture}
\end{center}
where $\widehat{A}=\{(a,\beta)\in A\times X^{[0,1]}\colon i_X(a)=\beta(0)\}$, $p$ is a fibration and $j$ a homotopy equivalence. As observed in~\cite[Lemma 2.13]{JoseManuel}, $\widehat{A}$ is also an $\ANR$. Define $s_0\colon U_0\to\widehat{A}$ to be $s_0=j\circ H_{0}(-,1)$, then $p\circ s_0\simeq U_0\toi X$ and $s_{0}|_A\simeq j$. Actually, since $p$ is a fibration, there is no problem in assuming that $p\circ s_0=U_0\toi X$. Following the proof of~\cite[Theorem 2.7]{JoseManuel}, there exist an open neighborhood $V_{0}$ of $U_0$ in $X$ and a map $\sigma_0\colon V_{0}\to \widehat{A}$ such that $p\circ\sigma_0=V_0\toi X$ and $\sigma_{0}|_{U_0}\simeq s_0$. In particular, \begin{equation}
\sigma_{0}|_A=(\sigma_0|_{U_0})|_A\simeq s_{0}|_{A}\simeq j.
\end{equation}
If $j':\widehat{A}\to A$ denotes a homotopy inverse of $j$, then 
$$
i_X\circ j'\circ\sigma_{0}=p\circ j\circ j'\circ \sigma_0\simeq p\circ \sigma_{0}=V_0\toi X.
$$
Hence, there exists a homotopy $G_0:V_0\times[0,1]\to X$ such that $G_{0}(x,0)=x$, $G_0(x,1)=i_X\circ j'\circ\sigma_{0}(x)\in A$ and $G_{0}(-,1)|_A=j'\circ (\sigma_{0}|_{A})\simeq j'\circ j\colorado{{}\simeq{}} 1_A$. 

On the other hand, for $i\geq 1$, set $s_i:=j\circ H_i(-,1)\colon U_i\to\widehat{A}$. An examination of the proof above (omitting \red{those} steps \red{that involve} $A\subseteq U_0$) reveals that there are open neighborhoods $V_i$ of $U_i$ in $X$ together with maps $\sigma_i:V_i\to \widehat{A}$ so that $p\circ\sigma_i=V_i\toi X$ and $\sigma_i|_{U_i}\simeq s_i$. Without losing generality we may assume that $V_i\cap A=\varnothing$ for, otherwise, we simply set $V'_i:=V_{i}\setminus A$ and $\sigma'_i=\sigma_i|_{V'_i}$. Furthermore, we have homotopies $G_i\colon V_i\times[0,1]\to X$ such that $G_i(x,0)=x$ and $G_i(x,1)=i_X\circ j'\circ\sigma_i(x)\in A$ for all $x\in V_i$.

\colorado{The required inequality} $\relcat(i_X)\leq n$ \colorado{then follows from} Lemma~\ref{technical lemma}.
\end{proof}

\colorado{We complete the proof of \thmref{Iwase=Dranish=GC} by} bringing together the main results of this section.

\begin{proof}[Proof of \thmref{Iwase=Dranish=GC}]
Since $X$ is an $\ANR$ space, the canonical embedding $i_X\colon \Delta X\toi X\times X$ is a cofibration between $\ANR$ spaces, \colorado{so that} $\relcat(i_X)=\relcat_{op}(i_X)=\relcat_g(i_X)$, \colorado{by~\cite[Theorems 1.6 and 2.16]{JoseManuel}. Furthermore,} the equalities $\relcat(i_X)=\relcat_{op}^{FH}(i_X)=\relcat^{FH}_g(i_X)$ come from Propositions~\ref{pro:relcat=relcatopFH} and~\ref{thm:relcat=relcatgFH}.

In order to show \colorado{the equality} $\TC^M(X)=\TC^{\colorado{FH}}_g(X)$, \colorado{we first} note that $\relcat_{g}^{FH}(i_X)\leq\TC^{\colorado{FH}}_g(X)$. \colorado{The latter} fact comes by noticing that, if $\{U_i\}_{i=0}^n$ is a (not necessarily open) \red{cover} of $X\times X$ and $s_i\colon U_i\to X^{[0,1]}$ are the local sections of the fibration \colorado{$e_{01}$} coming from Definition~\ref{def:FHGMTC}, then one can define homotopies $H_{0}\colon (U_{0},\Delta X)\times [0,1]\to (X\times X,\Delta X)$ and $H_i\colon U_i\times[0,1]\to X\times X$ ($i\geq 1$) as $H_i(x,y,t)=(s_{i}(x,y)(t),y)$ ($i\geq 0$) satisfying the two items of Definition~\ref{def:relcatgFH}. Likewise, it is clear that $\TC^{\colorado{FH}}_g(X)\leq\TC^{\colorado{FH}}(X)$; nevertheless, the latter expression agrees with $\TC^{M}(X)$ due to Remark~\ref{TCM=TCDM=TCFHM}. Finally, \colorado{the equality} $\TC^M(X)=\relcat(i_X)$ follows from~\cite[Theorem 12]{CGV}, while $\relcat(i_X)$ equals $\relcat_{g}^{FH}(i_X)$ by our initial discussion. In summary, \colorado{we have}
$$
\relcat_{g}^{FH}(i_X)\leq\TC^{\colorado{FH}}_g(X)\leq \TC^{\colorado{FH}}(X)=\TC^{M}(X)=\relcat_{g}^{FH}(i_X),
$$
\colorado{which completes the proof.}
\end{proof}

\section{Proof of Theorem~\ref{dejosemanuel}}
\colorados{We \rouge{start in the non-generalized setting, i.e., by proving that} the stasis condition~$(2)$ in Definition~\ref{def:FHGMTC} can be omitted without altering the numerical value of $\TC^{FH}(X)$. \rouge{Let} $\{(U_i,s_i)\}_{i=0}^n$ be a motion planner \rouge{with} $\Delta X\subseteq U_0$ and $\Delta X\cap U_i=\varnothing$ for all $i\geq 1$. \rouge{We do not assume} that the section $s_0$ of $e_{01}$ yields constant paths when restricted to $\Delta X$, \rouge{but we do assume that all subsets $U_i$ are open. The task is} to construct a motion planner $\{(V_i,\sigma_i)\}_{i=0}^n$ \rouge{of a Fadell-Husseini type,} that is, \rouge{one that consists of open sets $V_i$ satisfying} $\Delta X\cap V_i=\varnothing$ for all $i\geq 1$, \rouge{as well as} $\Delta X\subseteq V_0$ \rouge{with} $\sigma_0(x,x)=c_x$, the constant path at $x$, for all $x\in X$.}

\rouge{If $n=0$, then $X$ is in fact contractible, so that the homotopy invariance of the monoidal topological complexity for locally equiconnected spaces (\cite[Proposition~2.17]{JoseManuel}) gives the required motion planner of a Fadell-Husseini type. We can therefore assume $n\geq1$. By~\cite[Theorem~II.1]{DyEi}, there is} \colorados{an open neighborhood $W$ of $\Delta X$ in $X\times X$ and a local section $\lambda:W\to X^{[0,1]}$ \rouge{of the end-points evaluation map $e_{01}$ satisfying} $\lambda(x,x)=c_x$ for all $x\in X$. Furthermore, by the normality \rouge{assumption,} there is an open cover $\{W_i\}_{i=0}^n$ of $X\times X$ such that $W_i\subseteq\overline{W_i}\subseteq U_i$ for all $i\geq 0$. \rouge{Consider the} open neighborhood $N$ of $\Delta X$ \rouge{given} by}
$$
\rouge{N=W\cap U_0\cap\left((X\times X)\setminus\overline{W_1}\rule{0mm}{4mm}\right)\cap\cdots\cap\left((X\times X)\setminus\overline{W_n}\rule{0mm}{4mm}\right).}
$$
\colorados{\rouge{Using once more} the normality of $X\times X$, take an open set $M$ in $X\times X$ \rouge{with} $\Delta X\subseteq M\subseteq\overline{M}\subseteq N$. Let $V_0=(U_0\setminus\overline{M})\rouge{{}\sqcup{}} M$ and define the reserved section $\sigma_0:V_0\to X^{[0,1]}$ of $e_{01}$ by
\[
\sigma_0(x,x')=\begin{cases}
s_0(x,x'),&\text{ if }(x,x')\in U_0\setminus\overline{M};\\
\lambda(x,x'),&\text{ if }(x,x')\in M.
\end{cases}
\]
Lastly, for \rouge{$1\leq i\leq n$,} set $V_i=W_i\rouge{{}\sqcup{}} (N\setminus\Delta X)$ and define the local sections $\sigma_i:V_i\to X^{[0,1]}$ of $e_{01}$ by
\[
\sigma_i(x,x')=\begin{cases}
s_i(x,x'),&\text{ if }(x,x')\in W_i;\\
\lambda(x,x'),&\text{ if }(x,x')\in N\setminus\Delta X.
\end{cases}
\]
\rouge{Then} $\{(V_i,\sigma_i)\}_{i=0}^n$ \rouge{is the required motion planner of Fadell-Husseini type, for $\{V_0,V_1,\ldots,V_n\}$ covers $X\times X$. Indeed, since $W_i\subseteq V_i$ for $i\geq 1$, the covering assertion follows by observing that $W_0\setminus\left(V_1\cup\cdots\cup V_n\right)\subseteq V_0$:}}
\begin{align*}
\rouge{W_0\setminus{}}&\rouge{\left(V_1\cup\cdots\cup V_n\right)\subseteq U_0\setminus\left(V_1\cup\cdots\cup V_n\right)=\left((U_0\setminus\overline{M})\cup(\overline{M}\setminus M)\cup M\right)\setminus\left(V_1\cup\cdots\cup V_n\right)}\\
&\rouge{=\left(V_0\cup(\overline{M}\setminus M)\right)\setminus\left(V_1\cup\cdots\cup V_n\right)=\left(V_0\setminus\left(V_1\cup\cdots\cup V_n\right)\right) \cup \left( 
\overline{M}\setminus \left(M\cup V_1\cup\cdots\cup V_n\right)
\right),}
\end{align*}
\rouge{where $\overline{M}\setminus \left(M\cup V_1\cup\cdots\cup V_n\right)\subseteq N\setminus\left(\Delta X\cup V_1\cup\cdots\cup V_n\right)=\left(N\setminus\Delta X\right)\setminus\left(V_1\cup\cdots\cup V_n\right)=\varnothing$, as $n\geq1$.}

\medskip
\rouge{We \magenta{end up by sketching} the argument for} the generalized case. \rouge{Let} $\{(U_i,s_i)\}_{i=0}^n$ be a \rouge{generalized} motion planner \rouge{consisting of a cover $\{U_i\}_{i=0}^n$ of $X\times X$ by not necessarily open} subsets $U_i$ \rouge{such that} $\Delta X\subseteq U_0$ and $\Delta X\cap U_i=\varnothing$ for all $i\geq 1$, \rouge{and of sections $s_i\colon U_i\to X^{[0,1]}$ of $e_{0,1}$}. \rouge{Again, without assuming} that~$s_0$ is a reserved section of $e_{01}$, \rouge{the task is to assure the existence of a corresponding generalized motion planner, one of whose rules is defined on the whole diagonal via constant paths.} \Rojo{Following the proof of~\cite[Theorem~2.7]{JoseManuel}, we can construct a new motion planner $\{(V_i,\sigma_i)\}_{i=0}^n$ so that, for all $i\geq 0$, $V_i$ is an open subset of $X\times X$, $U_{i}\subseteq V_i$ (so $\Delta X\subseteq V_0$), and $\sigma_i|_{U_i}\simeq s_i$. Furthermore, without loss of generality we can assume $\Delta X\cap V_i=\varnothing$ for all $i\geq 1$}. \rouge{Then}, by the \rouge{argument in the} non-generalized case, we can \rouge{fix the situation so to have in addition} $\sigma_0(x,x)=c_x$, the constant path at $x$, for all $x\in X$, \magenta{thus completing} the argument.

\section{Polyhedral products of groups}\label{applications}

This section is devoted to proving \thmref{TC(G^K)=TCM(G^K)} via \thmref{Iwase=Dranish=GC}. We start by giving a quick overview on polyhedral products and discussing \magenta{a} direct consequence \magenta{(Corollary~\ref{pparentH})} of \thmref{TC(G^K)=TCM(G^K)}. 
\begin{definition}
Let $(\underline{X},\underline{A})=\{(X_{i},A_{i})\}_{i=1}^{m}$ be a family of pairs of spaces and $K$ be an abstract simplicial complex with $m$ vertices labeled by the set $\{1,2,\ldots,m\}$. The polyhedral product determined by $(\underline{X},\underline{A})$ and $K$ \red{is}\footnote{Note that $(\underline{X},\underline{A})^{\sigma_1}$ is contained in $(\underline{X},\underline{A})^{\sigma_2}$ provided $\sigma_1\subseteq\sigma_2$. Therefore, it suffices to take the union over all the maximal simplices of $K$ in~(\ref{definition of polyhedral product}), this is, simplices that are not contained in any other simplex of $K$.} 
\begin{equation}\label{definition of polyhedral product}
(\underline{X},\underline{A})^{K}=\cup_{\sigma\in K}(\underline{X},\underline{A})^{\sigma},
\end{equation}
where $(\underline{X},\underline{A})^{\sigma}=\prod_{i=1}^{m}Y_{i}$ and  
\[
Y_{i}=\begin{cases}
A_i,&\text{if}\hspace{1mm} i\in\{1,2,\ldots,m\}\setminus\sigma;\\ 
X_{i},&\text{if}\hspace{1mm} i\in \sigma.\
\end{cases}
\]
\end{definition}

We are interested in the case where all $A_{i}=\ast$, \rouge{in which case} $(\underline{X},\ast)^{K}$ and $(\underline{X},\ast)^{\sigma}$ are simply denoted by $\underline{X}^{K}$ and $\underline{X}^{\sigma}$, respectively. Moreover, it is clear that, for any $\sigma\in K$, $\underline{X}^\sigma$ is a retract of $\underline{X}^K$ and $\underline{X}^\sigma\approx\prod_{i\in\sigma}X_i$.

\begin{definition}\label{definition of LS logarithmic}
A family of based spaces $\underline{X}=\{(X_{i},\ast)\}_{i=1}^{m}$ is said to be $\LS$-logarithmic if
$$
\cat(X_{i_1}\times\cdots\times X_{i_k})=\cat(X_{i_1})+\cdots+\cat(X_{i_k})
$$
holds \red{for all strictly increasing sequences $1\leq i_1<\cdots<i_k\leq m$.}
\end{definition}
\begin{example}
The family $\underline{G}=\{(U(n),e_i)\}_{i=1}^{m}$, where $U(n)$ denotes the \red{$n$-th} unitary group, fulfills the requirements of Theorem~\ref{TC(G^K)=TCM(G^K)}. The $\LS$-logarithmicity hypothesis comes from~\cite[Example 3.3]{Lupton}, while $\cat(U(n))=n$ is guaranteed by~\cite[Theorem 9.47]{CLOT}. \thmref{TC(G^K)=TCM(G^K)} thus gives
\begin{align*}
\TC(\polyg)&=\TC^M(\polyg)=\max\bigg\{\sum_{i\in\sigma_{1}\cup\sigma_{2}}\cat(U(n))\colon \sigma_1,\sigma_2\in K\bigg\}\\
&=\max\bigg\{\sum_{i\in\sigma_{1}\cup\sigma_{2}}n\colon \sigma_1,\sigma_2\in K\bigg\}=n\cdot\max\{|\sigma_{1}\cup\sigma_{2}|\colon \sigma_1,\sigma_2\in K\}.
\end{align*}
Setting $n=1$, we recover the result obtained in~\cite[Theorem 2.7]{GGY2016polyhedral} (for $r=2$ and \red{all spheres being 1-dimensional}). In fact, Theorem~\ref{TC(G^K)=TCM(G^K)} 
determines the topological complexity and the monoidal topological complexity of a polyhedral product whose factors are unitary groups or special unitary groups of possibly different dimensions. In such a case, the $\LS$-logarithmicity hypothesis is guaranteed by~\cite[Example 3.3]{Lupton}.
\end{example}
\begin{corollary}\label{pparentH}
Let $\underline{G}$ \red{be an LS-logarithmic} based family \red{as the one} in \thmref{TC(G^K)=TCM(G^K)}. If \red{no} $G_{i}$ is contractible, then $\polyg$ admits an $H$-space structure if and only if $K$ is the standard $(m-1)$-simplex.
\end{corollary}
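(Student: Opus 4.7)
The plan is to prove the two implications separately. The \emph{if} direction is immediate: when $K=\Delta^{m-1}$, the polyhedral product $\polyg$ coincides with the direct product $\prod_{i=1}^m G_i$, a topological group and hence an $H$-space.

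For the \emph{only if} direction I argue by contraposition. Write $f(\sigma):=\sum_{i\in\sigma}\cat(G_i)$ for $\sigma\subseteq\{1,\ldots,m\}$, so that Theorem~\ref{TC(G^K)=TCM(G^K)} together with~(\ref{cat of G^K}) reads
\[
\TC(\polyg)=\max_{\sigma_1,\sigma_2\in K}f(\sigma_1\cup\sigma_2),\qquad \cat(\polyg)=\max_{\sigma\in K}f(\sigma).
\]
Since no $G_i$ is contractible, $\cat(G_i)\geq 1$ for all $i$, so $f$ is strictly monotone under inclusion of subsets. In particular, the maximum defining $\cat(\polyg)$ is attained at some facet $F$ of $K$. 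Assuming $K\neq\Delta^{m-1}$, we have $\{1,\ldots,m\}\notin K$, so $F\subsetneq\{1,\ldots,m\}$; choose $j\in\{1,\ldots,m\}\setminus F$, noting that $\{j\}\in K$ because $K$ has vertex set $\{1,\ldots,m\}$. Taking $\sigma_1=F$ and $\sigma_2=\{j\}$ in the formula for $\TC$ yields
\[
\TC(\polyg)\geq f(F\cup\{j\})=f(F)+\cat(G_j)\geq \cat(\polyg)+1>\cat(\polyg).
\]
This strict inequality contradicts the classical identity $\TC(X)=\cat(X)$ valid for any connected $H$-space (the argument of~\cite[Lemma~8.2]{Farber2}, already invoked in the proof of Proposition~\ref{MTC of a CW-TG}), so $\polyg$ cannot be an $H$-space.

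The whole argument is a strict-monotonicity computation riding on top of the combinatorial formulas from Theorem~\ref{TC(G^K)=TCM(G^K)}, so no serious technical obstacle is expected. The point where the non-contractibility hypothesis enters in an essential way is the bound $f(F\cup\{j\})\geq f(F)+1$: if some $G_i$ were contractible, then $\cat(G_i)=0$ and adjoining its vertex would not strictly raise the $f$-value, whence the implication would collapse.
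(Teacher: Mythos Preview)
Your argument is correct and essentially identical to the paper's: both combine the formulas from Theorem~\ref{TC(G^K)=TCM(G^K)} and~(\ref{cat of G^K}) with the identity $\TC=\cat$ for $H$-spaces, using $\cat(G_i)\geq 1$ to force $K=\Delta^{m-1}$; you simply make the combinatorial step explicit via contraposition where the paper leaves it implicit. One bibliographic nit: \cite[Lemma~8.2]{Farber2} treats topological groups rather than general $H$-spaces, so the reference you want for $\TC(X)=\cat(X)$ in the $H$-space setting is~\cite[Theorem~1]{Lupton} (Lupton--Scherer), which is exactly what the paper invokes.
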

\begin{proof}
If $K$ is the standard $(m-1)$-simplex, then $\polyg=G_{1}\times\cdots\times G_{m}$ is a topological group, and hence it is an $H$-space. On the other hand, suppose that $\polyg$ admits \red{an $H$-space structure. Being} connected \red{and cellular, $\polyg$ satisfies} 
\begin{align*}
&\max\bigg\{\sum_{i\in\sigma_{1}\cup\sigma_{2}}\cat(G_{i})\colon \sigma_1,\sigma_2\in K\bigg\}=\TC(\polyg)=\cat(\polyg)\\
&=\max\bigg\{\sum_{i\in\sigma}\cat(G_{i})\colon\sigma\in K\bigg\},
\end{align*}
where the first equality comes from Theorem~\ref{TC(G^K)=TCM(G^K)}, the second one follows from~\cite[Theorem 1]{Lupton}, and the third one is guaranteed by~(\ref{cat of G^K}). Finally, bearing in mind that both maximums above agree, and \red{that} $\cat(G_{i})\geq 1$ for all $i\in\{1,\ldots,m\}$, we conclude that $K$ is the standard $(m-1)$-simplex.
\end{proof}

\rouge{We now delve into the proof of \thmref{TC(G^K)=TCM(G^K)}, starting with the following auxiliary result:} 

\begin{lemma}\label{Lower bound}
Let \rouge{$\underline{G}^K$} be as in Theorem~\ref{TC(G^K)=TCM(G^K)}. Then \rouge{$\TC(\polyg)$ is no less than}
\begin{equation}\label{definition of C(G1,Gm,K)}
\colorado{\normados:={}}\max\bigg\{\sum_{i\in\sigma_{1}\cup\sigma_{2}}\cat(G_{i})\colon \sigma_1,\sigma_2\in K\bigg\}.
\end{equation}
\end{lemma}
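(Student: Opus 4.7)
The plan is to establish, for every fixed pair of simplices $\sigma_1,\sigma_2\in K$, the inequality $\TC(\polyg)\geq\sum_{i\in\sigma_1\cup\sigma_2}\cat(G_i)$; the lemma then follows by taking the maximum over such pairs. Set $\tau:=\sigma_1\cup\sigma_2$ and consider the retracts $A:=\underline{G}^{\sigma_1}$ and $B:=\underline{G}^{\sigma_2}$ of $\polyg$, together with the connected $\CW$ topological group $H:=\prod_{i\in\tau}G_i$, into which both $A$ and $B$ embed via extension of coordinates by identities on $\tau\setminus\sigma_1$ and $\tau\setminus\sigma_2$, respectively. The guiding idea is to exploit the group structure of $H$ so as to transport a motion planner on $\polyg$ into a sectional-category witness for the based path-space fibration $\mathrm{ev}_1\colon PH\to H$, whose sectional category is known to coincide with $\cat(H)$.

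The first step is to introduce the continuous map $\mu\colon A\times B\to H$ defined by $\mu(a,b):=a^{-1}b$ (using the group structure of $H$) and to exhibit an explicit continuous section $s_0\colon H\to A\times B$ of $\mu$. Given $h=(h_i)_{i\in\tau}\in H$, I would set $s_0(h):=(a(h),b(h))$ coordinatewise by
\[
a(h)_i=\begin{cases}h_i^{-1},&i\in\sigma_1\setminus\sigma_2,\\ e_i,&i\in\sigma_1\cap\sigma_2,\end{cases}\qquad b(h)_i=h_i\ \text{ for all }i\in\sigma_2.
\]
The identity $\mu\circ s_0=1_H$ follows from a direct componentwise computation, while continuity of $s_0$ is immediate from continuity of projections and inversions in each $G_i$.

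The second step uses a motion planner $\{(U_j,s_j)\}_{j=0}^n$ realizing $n=\TC(\polyg)$. Let $p_\tau\colon\polyg\to H$ be the coordinate projection onto $\tau$, and set $V_j:=U_j\cap(A\times B)$, an open cover of $A\times B$. For $(a,b)\in V_j$, I would left-translate the projected path $p_\tau\circ s_j(a,b)$ by $a^{-1}\in H$ to produce
\[
u_j(a,b)(t):=a^{-1}\cdot p_\tau\!\left(s_j(a,b)(t)\right),
\]
a continuous map $u_j\colon V_j\to PH$ lifting $\mu|_{V_j}$. Precomposing with $s_0$ on the open cover $\{W_j\}_{j=0}^n$ of $H$, where $W_j:=s_0^{-1}(V_j)$, yields local sections $u_j\circ s_0|_{W_j}$ of $\mathrm{ev}_1$ over each $W_j$. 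Consequently $\secat(\mathrm{ev}_1)\leq n$, and since $\secat(\mathrm{ev}_1)=\cat(H)=\sum_{i\in\tau}\cat(G_i)$---the last equality by the LS-logarithmicity hypothesis applied to $\tau$---the inequality $\TC(\polyg)\geq\sum_{i\in\tau}\cat(G_i)$ is proved.

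The main obstacle is the explicit construction and verification of the section $s_0$: although it is a coordinate-bookkeeping exercise, this is precisely the step that exploits $\sigma_1$ and $\sigma_2$ simultaneously and is responsible for the appearance of the full union $\sigma_1\cup\sigma_2$ in the bound (a naive ``difference'' argument recovers only $\sigma_1\triangle\sigma_2$). Everything else amounts to a standard pullback-type sectional-category transfer.
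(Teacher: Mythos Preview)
Your argument is correct. The paper's proof, by contrast, is a two-line citation: it invokes \cite[Corollary~6.15]{AGO} to obtain $\TC(\polyg)\geq\cat(\underline{G}^{\sigma_1}\times\underline{G}^{\sigma_2})$ for any \emph{disjoint} pair $\sigma_1,\sigma_2\in K$ (the general case reduces to this one because $\sigma_1\setminus\sigma_2$ is a face of $\sigma_1$, hence lies in $K$, and has the same union with $\sigma_2$), and then applies the $\LS$-logarithmicity hypothesis.

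Your route is genuinely different and self-contained. Rather than quoting an external retract-type lower bound for polyhedral products, you exploit the group structure on $H=\prod_{i\in\tau}G_i$ directly: the shear map $\mu(a,b)=a^{-1}b$ together with the explicit section $s_0$ lets you transport the motion planner on $\polyg$ (restricted to $A\times B$, projected via $p_\tau$, and left-translated) into local sections of the based path fibration $PH\to H$, whence $\TC(\polyg)\geq\cat(H)$. This is essentially Farber's argument for $\TC(G)\geq\cat(G)$ adapted to the polyhedral setting, and it handles arbitrary (not necessarily disjoint) $\sigma_1,\sigma_2$ uniformly without a preliminary reduction. The paper's approach is terser and places the bound within the broader framework of \cite{AGO}, where no group hypothesis on the factors is needed; yours trades that generality for an argument that is explicit and free of external references.
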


\rouge{Note that the maximum in~(\ref{definition of C(G1,Gm,K)}) is realized by maximal simplices of $K$.}

\begin{proof}
From~\cite[Corollary 6.15]{AGO} we get $\TC(\polyg)\geq\cat(\und{G}^{\sigma_1}\times\und{G}^{\sigma_2})$ for any disjoint simplices $\sigma_1,\sigma_2\in K$. The result follows in view of the $\LS$-logarithmicity hypothesis.
\end{proof}

Since $\TC(\polyg)\leq\TC^M(\polyg)$, the proof of \thmref{TC(G^K)=TCM(G^K)} will be complete once we prove:

\begin{proposition}\label{upper bound}
Let $\rouge{\underline{G}^K}$ be as in Theorem~\ref{TC(G^K)=TCM(G^K)}. \rouge{Then} $\TC^M(\polyg)\leq\rouge{\normados.}$ 
\end{proposition}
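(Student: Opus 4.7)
The plan is to construct a generalized Fadell-Husseini motion planner on $\polyg\times\polyg$ with exactly $\normados+1$ local domains, and then invoke \thmref{Iwase=Dranish=GC} to convert this into the desired bound on $\TC^{M}(\polyg)$. First I will observe that $\polyg$ is an $\ANR$, being a locally compact $\CW$ complex (by~\cite{Lundell}, Appendix~II, as already used in the proof of \propref{MTC of a CW-TG}). Thus the equality $\TC^{M}(\polyg)=\TC^{FH}_{g}(\polyg)$ from \thmref{Iwase=Dranish=GC} applies, and it suffices to exhibit a (not necessarily open) cover $\{W_{0},\ldots,W_{\normados}\}$ of $\polyg\times\polyg$ with continuous sections of $e_{01}$, verifying $\Delta\polyg\subseteq W_{0}$ with reserved section and $\Delta\polyg\cap W_{k}=\varnothing$ for $k\geq 1$.

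The building blocks are supplied by \propref{MTC of a CW-TG} applied to each $G_{i}$: an open $\LS$-cover $\{N^{i}_{j}\}_{j=0}^{n_{i}}$ of $G_{i}$ (with $n_{i}:=\cat(G_{i})$) and homotopies $H^{i}_{j}\colon N^{i}_{j}\times[0,1]\to G_{i}$ satisfying $H^{i}_{j}(-,0)=\mathrm{id}$, $H^{i}_{j}(-,1)=e_{i}$, $e_{i}\notin N^{i}_{j}$ for $j\geq 1$, and $H^{i}_{0}(e_{i},-)\equiv e_{i}$; the induced open sets $V^{i}_{j}=\{(a,b)\in G_{i}\times G_{i}\colon b^{-1}a\in N^{i}_{j}\}$ and reserved sections $s^{i}_{j}(a,b)(t)=bH^{i}_{j}(b^{-1}a,t)$ then form a Fadell-Husseini motion planner for each $G_{i}$. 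For each triple $(\sigma_{1},\sigma_{2},\varphi)$ with $\sigma_{1},\sigma_{2}\in K$ and $\varphi\colon\sigma_{1}\cup\sigma_{2}\to\mathbb{Z}_{\geq 0}$ satisfying $\varphi(i)\in\{0,\ldots,n_{i}\}$, I will define the piece $U^{\varphi}_{\sigma_{1},\sigma_{2}}\subseteq\underline{G}^{\sigma_{1}}\times\underline{G}^{\sigma_{2}}$ by requiring $(a_{i},b_{i})\in V^{i}_{\varphi(i)}$ for $i\in\sigma_{1}\cap\sigma_{2}$, $a_{i}\in N^{i}_{\varphi(i)}$ for $i\in\sigma_{1}\setminus\sigma_{2}$, and $b_{i}\in N^{i}_{\varphi(i)}$ for $i\in\sigma_{2}\setminus\sigma_{1}$. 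The associated section is a two-phase motion: for $t\in[0,1/2]$, coordinates in $\sigma_{1}\setminus\sigma_{2}$ follow $H^{i}_{\varphi(i)}(a_{i},2t)$, those in $\sigma_{1}\cap\sigma_{2}$ follow the direct path $s^{i}_{\varphi(i)}(a_{i},b_{i})(t)$, while the remaining coordinates stay at $e_{i}$; for $t\in[1/2,1]$, the $\sigma_{1}\setminus\sigma_{2}$ and outside coordinates rest at $e_{i}$, the $\sigma_{1}\cap\sigma_{2}$ coordinates continue $s^{i}_{\varphi(i)}(a_{i},b_{i})(t)$, and the $\sigma_{2}\setminus\sigma_{1}$ coordinates follow $H^{i}_{\varphi(i)}(b_{i},2-2t)$. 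Routine checks show the path is continuous at $t=1/2$, joins $a$ to $b$, and has support contained in $\sigma_{1}$ on $[0,1/2]$ and in $\sigma_{2}$ on $[1/2,1]$, so it stays in $\polyg$.

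Grouping by the weight $|\varphi|:=\sum_{i\in\sigma_{1}\cup\sigma_{2}}\varphi(i)$, which ranges over $\{0,\ldots,\normados\}$, I let $W_{k}$ be the union of the $U^{\varphi}_{\sigma_{1},\sigma_{2}}$ with $|\varphi|=k$. The piece $W_{0}$ contains $\Delta\polyg$ via $(a,a)\in U^{0}_{\mathrm{supp}(a),\mathrm{supp}(a)}$ with constant-path section thanks to $H^{i}_{0}(e_{i},-)\equiv e_{i}$, while each $W_{k}$ with $k\geq 1$ avoids $\Delta\polyg$ because some $\varphi(i)\geq 1$ forces $e_{i}\notin N^{i}_{\varphi(i)}$, precluding $b_{i}^{-1}a_{i}=e_{i}$. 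The main obstacle will be defining a single continuous section on each $W_{k}$, because sections attached to different triples $(\sigma_{1},\sigma_{2},\varphi)$ of the same weight need not agree on overlaps. I expect to handle this by canonicalizing the choice: given $(a,b)\in\polyg\times\polyg$, take $(\sigma_{1},\sigma_{2})=(\mathrm{supp}(a),\mathrm{supp}(b))$---both are simplices of $K$---and pick $\varphi(i)$ by a fixed rule such as the smallest admissible index. Continuity across the support-decomposition strata then reduces to a coordinate-wise check: whenever $a_{i}$ or $b_{i}$ approaches $e_{i}$, the containment $e_{i}\notin N^{i}_{j}$ for $j\geq 1$ forces $\varphi(i)$ to collapse to $0$, after which the stasis condition $H^{i}_{0}(e_{i},-)\equiv e_{i}$ ensures that the section's $i$-th coordinate matches its limiting value in the adjacent stratum. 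Combining this upper bound with \lemref{Lower bound} then yields $\TC(\polyg)=\TC^{M}(\polyg)=\normados$ and completes the proof of \thmref{TC(G^K)=TCM(G^K)}.
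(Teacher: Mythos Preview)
Your overall strategy matches the paper's: reduce to $\TC^{FH}_g$ via \thmref{Iwase=Dranish=GC}, assemble local rules from the factor planners of \propref{MTC of a CW-TG}, and stratify $\polyg\times\polyg$ by an integer weight bounded above by $\normados$. The gap is in the continuity of the section on each $W_k$. Your coordinate-wise check handles only the transition where $i\in\sigma_1\setminus\sigma_2$ (so $b_i=e_i$) and $a_i\to e_i$: there indeed $\varphi(i)$ is eventually~$0$ and $H^i_0(a_i,2t)\to e_i$. It does not handle the transition where $i\in\sigma_1\cap\sigma_2$, $b_i\neq e_i$ is fixed, and $a_i\to e_i$ (so $i$ moves into $\sigma_2\setminus\sigma_1$). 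Before the limit the $i$-th coordinate runs the direct path $s^i_{\varphi(i)}(a_i,b_i)(t)=b_iH^i_{\varphi(i)}(b_i^{-1}a_i,t)$ over the full interval $[0,1]$; after the limit it is the two-phase path, constant at $e_i$ on $[0,1/2]$. These disagree: already for $m=1$ and $G_1=S^1$ with the obvious two-set categorical cover, one can take $b\neq e$ and $a_n\to e$ so that both $(a_n,b)$ and $(e,b)$ have canonical weight~$0$, yet the section values at $t=1/4$ differ. Your assertion that ``$\varphi(i)$ collapses to $0$'' is also false in this regime, since on the stratum $\sigma_1\cap\sigma_2$ the index $\varphi(i)$ is governed by $b_i^{-1}a_i\to b_i^{-1}$, not by $a_i$; and the very definition of $\varphi(i)$ switches across the stratum boundary from a condition on $b_i^{-1}a_i$ to one on $b_i$, so neither the index nor the weight is stable.

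The paper avoids this by (i) working throughout with the cover $\{V_{ik}\}$ of $G_i\times G_i$ and its sections $\lambda_{ik}$, never with separate covers of $G_i$, so that the index in coordinate $i$ depends uniformly on the pair $(a_{i1},a_{i2})$; and (ii) replacing your hard support-based phase split with a metric reparametrization: coordinate $i$ rests at $a_{i1}$ on $\bigl[0,\,\tfrac12-\tfrac{d_i(a_{i1},e_i)}{2\delta(G_i)}\bigr]$, then runs $\lambda_{ik}$ at constant speed, then rests at $a_{i2}$ on $\bigl[\tfrac12+\tfrac{d_i(a_{i2},e_i)}{2\delta(G_i)},\,1\bigr]$. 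As $a_{i1}\to e_i$ the initial rest interval grows \emph{continuously} to $[0,1/2]$, which is exactly what confines the image to $\underline{G}^{\sigma_1}$ on $[0,1/2]$ without a jump. With this single formula in place, each $W_j$ decomposes as a topological disjoint union of the pieces $W_{(j_1,\ldots,j_m)}$, and continuity on each piece suffices.
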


\subsection{Proof of Proposition~\ref{upper bound}}
As we remarked in the proof of \propref{MTC of a CW-TG}, \colorados{locally compact $\CW$ complexes are $\ANR$ spaces}. Consequently, $\underline{G}^{\sigma}$ is an $\ANR$ for each $\sigma\in K$, and therefore $\polyg=\cup_{\sigma\in K}\underline{G}^{\sigma}$ is an $\ANR$ as well. Furthermore, in view of \thmref{Iwase=Dranish=GC}, it suffices to show that $\TC^{\colorado{FH}}_g(\polyg)\leq\normados$.

In order to attain the latter task, we will construct a general cover (not necessarily open) of $\polyg\times \polyg$ fulfilling the conditions of \defref{def:FHGMTC} (Proposition~\ref{general cover of G^KXG^K} below).

For each $i\in\{1,\ldots,m\}$, let $\{V_{i0},\ldots,V_{ic_{i}}\}$ be an open cover of $G_{i}\times G_{i}$ together with reserved sections $\lambda_{ik}\colon V_{ik}\rightarrow G_{i}^{[0,1]}$ of the end-points evaluation map $\colorado{e_{01}}\colon G_{i}^{[0,1]}\rightarrow G_{i}\times G_{i}$. Here, $c_i$ stands for the $\LS$ category of the corresponding polyhedral product factor $G_i$. As shown in the proof of Proposition~\ref{MTC of a CW-TG}, we can assume that the diagonal of $G_{i}$ is contained in $V_{i0}$, as well as $\Delta G_{i}\cap V_{ik}=\varnothing$ for all $k\in\{1,\ldots,c_i\}$. 

The open sets $V_{ik}$ might not be disjoint; however, this requirement can be achieved by redefining 
$$
U_{ik}=V_{ik}\setminus (V_{i0}\cup\cdots\cup V_{i(k-1)})
$$
for each $k\in\{0,1,\ldots,c_i\}$ and $i\in\{1,\ldots,m\}$ (so $U_{i0}=V_{i0}$). We say that a pair $(a,b)$ in $G_{i}\times G_{i}$ produces $k$ closed conditions if $(a,b)\in U_{ik}$, with $k\in\{0,1,\ldots,c_i\}$. The number of closed conditions produced by $(a,b)$ is denoted by $C(a,b)$.

We regard an element $(a_1,a_2)$ of $\polyg\times\polyg$ as a matrix of size $m\times 2$, i.e., 
\begin{equation}
\nonumber
(a_1,a_2)=
\begin{pmatrix}
a_{11}&a_{12}\\
\vdots&\vdots\\
a_{m1}&a_{m2}
\end{pmatrix},
\end{equation}
where each column belongs to $\polyg$, say 
$$
(a_{11},\ldots,a_{m1})\in \underline{G}^{\sigma_{1}} \text{ and } (a_{12},\ldots,a_{m2})\in \underline{G}^{\sigma_{2}},
$$ 
with $\sigma_{1},\sigma_{2}\in K$. We know that each row $(a_{i1},a_{i2})$ of the matrix $(a_1,a_2)$ lies in a unique set $U_{ik}$ for $k=C(a_{i1},a_{i2})\in\{0,1,\ldots,c_i\}$, so the number of closed conditions determined by $(a_1,a_2)$, denoted \rouge{by} $C(a_1,a_2)$, is defined to be the sum of closed conditions produced by the rows $(a_{i1},a_{i2})$ of $(a_1,a_2)$, this is, 
$$
C(a_1,a_2):=\sum_{i=1}^{m}C(a_{i1},a_{i2}).
$$
It is clear that $C(a_1,a_2)\leq\sum_{i\in\sigma_{1}\cup\sigma_{2}}c_i\leq \normados$, and hence we have proved: 

\begin{proposition}\label{general cover of G^KXG^K}
The sets $W_{j}=\{(a_1,a_2)\in \polyg\times\polyg\colon C(a_1,a_2)=j\}$, with $j$ belonging to $\{0,1,\ldots,\normados\}$, form a pairwise disjoint cover of $\polyg\times\polyg$.
\end{proposition}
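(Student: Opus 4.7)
The plan is to verify two things: (i) that the assignment $C\colon\polyg\times\polyg\to\mathbb{Z}_{\geq 0}$ is a well-defined function, and (ii) that its range is contained in $\{0,1,\ldots,\normados\}$. Once these two points are settled, the proposition is tautological, since by definition the $W_j$ are the level sets of $C$.

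For (i): the sets $U_{i0},\ldots,U_{ic_i}$ were built as a \emph{pairwise disjoint} cover of $G_i\times G_i$ via the subtraction $U_{ik}=V_{ik}\setminus(V_{i0}\cup\cdots\cup V_{i(k-1)})$, so each row $(a_{i1},a_{i2})$ belongs to a unique $U_{ik}$. This assigns a single integer $C(a_{i1},a_{i2})\in\{0,1,\ldots,c_i\}$ to each row, hence a single total $C(a_1,a_2)=\sum_{i=1}^m C(a_{i1},a_{i2})$. Pairwise disjointness of the $W_j$ follows at once: if $(a_1,a_2)\in W_j\cap W_{j'}$, then $j=C(a_1,a_2)=j'$.

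For (ii): I will write $(a_1,a_2)$ as a matrix whose columns $(a_{11},\ldots,a_{m1})$ and $(a_{12},\ldots,a_{m2})$ lie in $\underline{G}^{\sigma_1}$ and $\underline{G}^{\sigma_2}$, respectively, for some $\sigma_1,\sigma_2\in K$. The key observation is that every row indexed by $i\notin\sigma_1\cup\sigma_2$ contributes zero: in that case $a_{i1}=a_{i2}=e_i$ by definition of the polyhedral product with $A_i=\{e_i\}$, so $(a_{i1},a_{i2})\in\Delta G_i\subseteq U_{i0}$, and therefore $C(a_{i1},a_{i2})=0$. Consequently,
\[
C(a_1,a_2)=\sum_{i\in\sigma_1\cup\sigma_2}C(a_{i1},a_{i2})\leq\sum_{i\in\sigma_1\cup\sigma_2}c_i\leq\normados,
\]
as required.

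There is no real obstacle in this verification: the substantive work has been done in the paragraphs preceding the statement (constructing the reserved local sections $\lambda_{ik}$, arranging $\Delta G_i\subseteq V_{i0}$, and trimming the $V_{ik}$'s into a disjoint family $U_{ik}$). The proposition merely repackages that data as a finite partition of $\polyg\times\polyg$ indexed by the number of ``closed conditions''. This partition will next provide, after assembling local rules coordinate-wise out of the $\lambda_{ik}$ on each $W_j$, the local domains of the desired Fadell-Husseini motion planner with $\normados+1$ members, yielding $\TC^{FH}_g(\polyg)\leq\normados$ and hence \propref{upper bound} via \thmref{Iwase=Dranish=GC}.
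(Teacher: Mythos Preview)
Your proposal is correct and matches the paper's approach: the paper has no separate proof block for this proposition, introducing it instead with the single line ``It is clear that $C(a_1,a_2)\leq\sum_{i\in\sigma_{1}\cup\sigma_{2}}c_i\leq \normados$, and hence we have proved:'' immediately before the statement. You have simply made explicit the two points the paper leaves implicit---well-definedness of $C$ from the disjointness of the $U_{ik}$, and the range bound via the observation that rows indexed outside $\sigma_1\cup\sigma_2$ land on $(e_i,e_i)\in\Delta G_i\subseteq U_{i0}$.
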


The proof of Proposition~\ref{upper bound} will be complete once a \magenta{(continuous)} local rule is constructed on each $W_j$. \magenta{The} task is attained by splitting $W_j$ into topological disjoint subsets (see Proposition~\ref{topological union proof} below), and then defining a \magenta{(continuous)} local section of the fibration \colorado{$e_{01}$} on each one of them.

A partition of $j$, with $j\in\{0,1,\ldots,\normados\}$, is an ordered tuple $(j_1,\ldots, j_{m})$ of nonnegative integers such that $j=j_{1}+\cdots+j_{m}$ and $0\leq j_{i}\leq c_i$ for each $i\in\{1,\ldots,m\}$. For such a partition of $j$, set
$$
W_{(j_1,\ldots, j_{m})}=\{(a_1,a_2)\in \polyg\times\polyg\colon C(a_{i1},a_{i2})=j_{i},\text{ for }i\in\{1,\ldots,m\}\}.
$$

It is straightforward to see that
\begin{equation}\label{topological union}
W_{j}=\displaystyle\bigsqcup_{(j_1,\ldots, j_{m})}W_{(j_1,\ldots, j_{m})},
\end{equation}
where the disjoint union runs over all partitions of $j$. We next show that~(\ref{topological union}) is in fact a topological union, this is, $W_{j}$ has the weak topology determined by the several $W_{(j_1,\ldots, j_{m})}$.

\begin{proposition}\label{topological union proof}
Let $j\in\{0,1,\ldots,\normados\}$. If $(j_1,\ldots, j_{m})$ and $(r_1,\ldots, r_{m})$ are two different partitions of $j$, then
\begin{equation}
\nonumber
\overline{W_{(j_1,\ldots, j_{m})}}\cap W_{(r_1,\ldots, r_{m})}=\varnothing=W_{(j_1,\ldots, j_{m})}\cap\overline{W_{(r_1,\ldots, r_{m})}}.
\end{equation}
\end{proposition}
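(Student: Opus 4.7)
The plan is to identify each $W_{(j_1,\ldots,j_m)}$ with a product structure, then exploit the fact that the $V_{ik}$ are open and the $U_{ik}$ arise by successive subtraction. Concretely, after reordering the coordinates of $\polyg\times\polyg$, regarded as a subspace of $\prod_{i=1}^m(G_i\times G_i)$, the set $W_{(j_1,\ldots,j_m)}$ is exactly the trace on $\polyg\times\polyg$ of the product $\prod_{i=1}^m U_{i,j_i}$. Consequently,
$$
\overline{W_{(j_1,\ldots,j_m)}}\subseteq \prod_{i=1}^m \overline{U_{i,j_i}}\cap (\polyg\times\polyg),
$$
so it suffices to exhibit an index $i_0$ for which $\overline{U_{i_0,j_{i_0}}}\cap U_{i_0,r_{i_0}}=\varnothing$ (and, symmetrically, an index $i_1$ with $U_{i_1,j_{i_1}}\cap \overline{U_{i_1,r_{i_1}}}=\varnothing$).

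Because $(j_1,\ldots,j_m)$ and $(r_1,\ldots,r_m)$ are distinct partitions of the same integer $j$, there must exist indices $i_0$ and $i_1$ with $j_{i_0}>r_{i_0}$ and $j_{i_1}<r_{i_1}$ (otherwise the sums could not coincide). Fix such $i_0$. By definition $U_{i_0,j_{i_0}}=V_{i_0,j_{i_0}}\setminus(V_{i_0,0}\cup\cdots\cup V_{i_0,j_{i_0}-1})$ and $U_{i_0,r_{i_0}}\subseteq V_{i_0,r_{i_0}}$; since $r_{i_0}<j_{i_0}$, the set $V_{i_0,r_{i_0}}$ is one of the open sets being subtracted in the definition of $U_{i_0,j_{i_0}}$, so $U_{i_0,j_{i_0}}\subseteq (G_{i_0}\times G_{i_0})\setminus V_{i_0,r_{i_0}}$. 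As $V_{i_0,r_{i_0}}$ is open, its complement is closed, and so
$$
\overline{U_{i_0,j_{i_0}}}\subseteq (G_{i_0}\times G_{i_0})\setminus V_{i_0,r_{i_0}}\subseteq (G_{i_0}\times G_{i_0})\setminus U_{i_0,r_{i_0}}.
$$
This gives $\overline{W_{(j_1,\ldots,j_m)}}\cap W_{(r_1,\ldots,r_m)}=\varnothing$. The symmetric equality $W_{(j_1,\ldots,j_m)}\cap\overline{W_{(r_1,\ldots,r_m)}}=\varnothing$ follows by applying the same argument to the index $i_1$, interchanging the roles of the two partitions.

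There is no real obstacle here; the only mild subtlety is recognizing that closure in $\polyg\times\polyg$ is controlled by closures in the ambient product $\prod_i (G_i\times G_i)$ (which holds since $\polyg\times\polyg$ is a subspace) and that the product of closures dominates the closure of a product. Once this is acknowledged, the argument reduces to a coordinate-wise verification in a single factor $G_{i_0}\times G_{i_0}$, where the disjointness is immediate from the openness of the $V_{i_0,k}$ and the recursive definition of the $U_{i_0,k}$.
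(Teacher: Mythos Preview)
Your proof is correct and follows essentially the same approach as the paper's: both arguments use the partition constraint to find an index where one component is strictly smaller than the other, then exploit that $U_{ik}$ lies in the closed complement of the open set $V_{ik'}$ for $k'<k$ to separate the closure of one piece from the other. The only cosmetic difference is that you phrase things via the product decomposition and closures of products, whereas the paper argues pointwise with elements; the underlying idea is identical.
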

\begin{proof}
Since $(j_1,\ldots,j_{m})\neq (r_1,\ldots,r_{m})$, there is a natural number $\ell\in\{1,\ldots,m\}$ with $j_{\ell}\neq r_{\ell}$, say $j_{\ell}<r_{\ell}$, while the equality $j_{1}+\cdots+j_{m}=j=r_{1}+\cdots+r_m$ forces the existence of another natural number $q\in\{1,\ldots,m\}$ such that $r_{q}<j_{q}$.

For elements $(a_{1},a_{2})\in W_{(j_1,\ldots,j_{m})}$ and $(b_{1},b_{2})\in W_{(r_1,\ldots,r_{m})}$ we have 
$$
(a_{\ell 1},a_{\ell 2})\in U_{\ell j_{\ell}}\text{ and }(b_{\ell 1},b_{\ell 2})\in U_{\ell r_{\ell}},
$$
then $(a_{\ell 1},a_{\ell 2})\in V_{\ell j_\ell}$ and $(b_{\ell 1},b_{\ell 2})\notin V_{\ell j_\ell}$ since $j_{\ell}<r_{\ell}$. It is clear that the latter condition is inherited by elements of $\overline{W_{(r_1,\ldots,r_{m})}}$, so the second equality of the proposition follows.

The statement $\overline{W_{(j_1,\ldots,j_{m})}}\cap W_{(r_1,\ldots,r_{m})}=\varnothing$ follows by using the condition $r_{q}<j_{q}$.
\end{proof}

In the rest of the section we construct a continuous local section of the end-point\Rojo{s}
evaluation map \colorado{$e_{01}$} on each $W_{(j_1,\ldots, j_{m})}$.  Such a task is performed in the following way: For $i\in\{1,\ldots,m\}$, let $d_i$ denote a metric on $G_i$. \colorados{Since $d_i$ is always equivalent to a bounded metric, we can assume that the diameter of $G_{i}$, defined by $\delta(G_i)=\sup\{d_{i}(a,b)\colon a,b\in G_{i}\}$, is finite.} Likewise, there is no problem in assuming that each diameter $\delta(G_i)$ is positive.

We reparametrize the initial navigational instructions $\lambda_{ik}$ as follows: For $k\in\{0,1,\ldots,c_i\}$, consider the section $\tau_{ik}\colon U_{ik}\rightarrow G_{i}^{[0,1]}$ of the end-points evaluation map $\colorado{e_{01}}\colon G_{i}^{[0,1]}\rightarrow G_{i}\times G_{i}$ where, for $(a_1,a_2)\in U_{ik}$, 
\[
\tau_{ik}(a_{1},a_{2})(t)=\begin{cases}
a_1,&\text{ if } d_{i1}+d_{i2}=0;\\
\lambda_{ik}(a_1,a_2)\big(\frac{2\delta(G_i)t}{d_{i1}+d_{i2}}\big),&\text{ if } 0\leq t\leq \frac{d_{i1}+d_{i2}}{2\delta(G_i)}\neq 0;\\
a_{2},&\text{ if } 0\neq \frac{d_{i1}+d_{i2}}{2\delta(G_i)}\leq t\leq 1;
\end{cases}
\]
and $d_{ij}=d_{i}(a_j,e_{i})$, $j=1,2$. Recall that $e_{i}$ denotes the neutral element of $G_{i}$.

The path $\tau_{ik}$ is clearly continuous on the open subset of $U_{ik}$ determined by the condition $d_{i1}+d_{i2}\neq 0$. The latter open subset of $U_{ik}$ equals in fact $U_{ik}$ unless $k=0$, so that $\tau_{ik}$ is continuous on the whole $U_{ik}$ for $k\in\{1,\ldots,c_i\}$. The continuity of $\tau_{i0}$ on $U_{i0}$ follows from the continuity of the reserved section $\lambda_{i0}$.

Define the (not necessarily continuous) section 
$$
\varphi\colon\polyg\times\polyg\rightarrow (\prod_{i=1}^{m}G_{i})^{[0,1]}
$$
of the fibration \colorado{$e_{01}$} to be $\varphi(a_{1},a_{2})=(\varphi_{1}(a_{11},a_{12}),\ldots,\varphi_{m}(a_{m1},a_{m2})),$
whose $i$th coordinate $\varphi_{i}(a_{i1},a_{i2})$ is the path in $G_{i}$, from $a_{i1}$ to $a_{i2}$, given by
\begin{equation}\label{final movement 2}
\varphi_{i}(a_{i1},a_{i2})(t)=\begin{cases}
a_{i1},&\text{ if } 0\leq t\leq t_{a_{i1}};\\
\mu(a_{i1},a_{i2})(t-t_{a_{i1}}),&\text{ if } t_{a_{i1}}\leq t\leq 1.
\end{cases}
\end{equation}
Here, $t_{a_{i1}}=\frac{1}{2}-\frac{d_{i}(a_{i1},e_{i})}{2\delta(G_i)}$ and 
\begin{equation}\label{final formulas 2}
\mu(a_{i1},a_{i2})=\begin{cases}
\tau_{i0}(a_{i1},a_{i2}),&\text{ if } (a_{i1},a_{i2})\in U_{i0};\\
\hspace{1mm}\vdots&\hspace{2mm}\vdots\\
\tau_{ic_{i}}(a_{i1},a_{i2}),&\text{ if } (a_{i1},a_{i2})\in U_{ic_{i}}.\\
\end{cases}
\end{equation}
By construction, the map $\varphi$ is a section of the fibration
$$
\colorado{e_{01}}\colon(\prod_{i=1}^{m}G_i)^{[0,1]}\rightarrow \prod_{i=1}^{m}G_i\times\prod_{i=1}^{m}G_i.
$$

Although $\varphi$ fails to be a continuous global section of \colorado{$e_{01}$,} its restriction to each $W_{(j_1,\ldots, j_{m})}$, where $(j_1,\ldots,j_{m})$ is a partition of $0\leq j\leq\normados$, is continuous since formulas~(\ref{final formulas 2}) can be rewritten as
\begin{equation}
\nonumber
\mu=\begin{cases}
\tau_{i0},&\text{ if } j_{i}=0;\\
\hspace{1mm}\vdots&\hspace{2mm}\vdots\\
\tau_{ic_{i}},&\text{ if } j_{i}=c_{i}.
\end{cases}
\end{equation}

\begin{remark}\label{unraveling formulas}
In preparation for the proof of our final result, we spell out formulas~(\ref{final movement 2}) in order to unravel the motion provided by $\varphi$ at the level of each polyhedral product factor $G_i$. Concretely, if $(a_{i1},a_{i2})\in U_{ik}$ for some $k\in\{0,1,\ldots,c_i\}$, the path $\varphi_{i}(a_{i1},a_{i2})$ is described as follows:
\begin{itemize}
\item if $0\leq t\leq \frac{1}{2}-\frac{d_{i}(a_{i1},e_{i})}{2\delta(G_i)}$, then stay at $a_{i1}$; 
\item if $\frac{1}{2}-\frac{d_{i}(a_{i1},e_{i})}{2\delta(G_i)}\leq t\leq \frac{1}{2}+\frac{d_{i}(a_{i2},e_{i})}{2\delta(G_i)}$, then move from $a_{i1}$ to $a_{i2}$ at constant speed via $\tau_{ik}$;
\item if $\frac{1}{2}+\frac{d_{i}(a_{i2},e_{i})}{2\delta(G_i)}\leq t\leq 1$, then stay at $a_{i2}$. 
\end{itemize} 
\end{remark}

Recall that $\varphi$ was defined \magenta{as a map} from $\polyg\times\polyg$ to $\left(\prod_{i=1}^m G_i\right)^{[0,1]}$. \magenta{We} next show that \magenta{the image of $\varphi$ is contained} in $(\polyg)^{[0,1]}$, thus completing the proof of Proposition~\ref{upper bound}.
\begin{proposition}
The image of $\varphi$ is contained in $(\polyg)^{[0,1]}$.
\end{proposition}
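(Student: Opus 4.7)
The plan is to exploit the key alignment built into the reparametrization~(\ref{final movement 2}): the stasis endpoints $t_{a_{i1}}=\frac{1}{2}-\frac{d_i(a_{i1},e_i)}{2\delta(G_i)}$ and $\frac{1}{2}+\frac{d_i(a_{i2},e_i)}{2\delta(G_i)}$ both degenerate to~$1/2$ exactly when the relevant endpoint is the neutral element~$e_i$. This is designed so that the midpoint $t=1/2$ becomes the meeting point of two half-paths that lie in (possibly different) polyhedral strata of~$\polyg$.

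Given $(a_1,a_2)\in\polyg\times\polyg$, I would begin by choosing simplices $\sigma_1,\sigma_2\in K$ with $a_j\in\underline{G}^{\sigma_j}$ for $j=1,2$; such $\sigma_j$ exist because, by the definition of the polyhedral product, every element of $\polyg$ has support contained in some simplex of~$K$. The strategy is to split $[0,1]$ into the halves $[0,1/2]$ and $[1/2,1]$ and verify separately that $\varphi(a_1,a_2)(t)\in\underline{G}^{\sigma_1}$ on the first half and $\varphi(a_1,a_2)(t)\in\underline{G}^{\sigma_2}$ on the second, thereby obtaining $\varphi(a_1,a_2)(t)\in\polyg$ throughout.

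For the first half, fix $t\in[0,1/2]$ and any $i\notin\sigma_1$. Then $a_{i1}=e_i$ forces $d_i(a_{i1},e_i)=0$, hence $t_{a_{i1}}=1/2\geq t$, and formula~(\ref{final movement 2}) immediately yields $\varphi_i(a_{i1},a_{i2})(t)=a_{i1}=e_i$. Thus the $i$-th coordinate of $\varphi(a_1,a_2)(t)$ equals $e_i$ for every $i\notin\sigma_1$, placing the output in $\underline{G}^{\sigma_1}$. The argument on $[1/2,1]$ is entirely symmetric: for $i\notin\sigma_2$ one has $d_i(a_{i2},e_i)=0$, so by Remark~\ref{unraveling formulas} the coordinate~$\varphi_i$ has already entered its rest-at-$a_{i2}=e_i$ phase by $t=1/2$ and remains there until $t=1$.

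I do not foresee any real obstacle; the whole purpose of the speed reparametrization~(\ref{final movement 2}) is precisely to make the transit time along~$\lambda_{ik}$ scale with the distance from~$e_i$, so that $t=1/2$ automatically passes through the intersection $\underline{G}^{\sigma_1}\cap\underline{G}^{\sigma_2}$ whenever an endpoint coordinate is trivial. It is worth emphasizing that no deeper simplicial combinatorics is invoked---in particular, the set $\sigma_1\cup\sigma_2$ need not be a simplex of~$K$, yet each individual time slice of $\varphi(a_1,a_2)$ still lands inside a bona-fide stratum~$\underline{G}^{\sigma_j}$ of the polyhedral product.
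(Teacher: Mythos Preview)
Your argument is correct and mirrors the paper's own proof almost verbatim: choose simplices $\sigma_1,\sigma_2$ carrying $a_1,a_2$, split at $t=1/2$, and use the vanishing of $d_i(a_{ij},e_i)$ for $i\notin\sigma_j$ to see that the $i$-th coordinate is parked at $e_i$ on the relevant half-interval (via Remark~\ref{unraveling formulas}). There is nothing to add.
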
 
\begin{proof}
Let $(a_{1},a_{2})\in \polyg\times\polyg$, we need to prove that $\varphi(a_1,a_{2})([0,1])\subseteq \polyg$. \colorado{So,} assume $(a_{11},\ldots,a_{m1})\in \underline{G}^{\sigma_{1}}$ and $(a_{12},\ldots,a_{m2})\in \underline{G}^{\sigma_{2}}$ with $\sigma_1,\sigma_{2}\in K$. By Remark~\ref{unraveling formulas}, for all $i\notin\sigma_{1}$, $a_{i1}=e_i$ keeps its position through time $t\leq 1/2$, so that $\varphi(a_1,a_{2})([0,1/2])\subseteq \underline{G}^{\sigma_1}\subseteq \polyg$. Again, Remark~\ref{unraveling formulas} shows that, for $i\notin\sigma_{2}$, the path $\varphi_{i}(a_{i1},a_{i2})$ has reached its final position $a_{i2}=e_i$ at time $1/2$, so that $\varphi(a_1,a_{2})([1/2,1])\subseteq \underline{G}^{\sigma_{2}}\subseteq \polyg$, and the proof is complete.
\end{proof}

\end{document}